\newtheorem{theorem}{Theorem}
\newtheorem{lemma}{Lemma}
\newtheorem{corollary}{Corollary}
\newcommand{\cB}{{\mathcal B}}
\newcommand{\cF}{{\mathcal F}}
\newcommand{\cH}{{\mathcal H}}
\newcommand{\cL}{{\mathcal L}}
\newcommand{\cN}{{\mathcal N}}
\newcommand{\cR}{{\mathcal R}}
\newcommand{\cS}{{\mathcal S}}
\newcommand{\cT}{{\mathcal T}}
\newcommand{\cA}{{\mathcal A}}
\newcommand{\opt}{{\rm OPT(\cB_\cT\mbox{-\rm splitting})}}
\title[Approximating Hybridization number and Directed Feedback Vertex Set]{Cycle killer... qu'est-ce que c'est? On the comparative approximability of hybridization number and directed feedback vertex set}
\author{Steven Kelk, Leo van Iersel, Nela Leki\'{c}, Simone Linz, Celine Scornavacca, Leen Stougie}
\thanks{Leo van Iersel was supported by a Veni grant of The Netherlands Organisation for Scientific Research (NWO)}
\keywords{Hybridization number, phylogenetic networks, directed feedback vertex set, approximation}
\date{\today}
\begin{document}

\begin{abstract}
\noindent
We show that the problem of computing the hybridization number of two rooted binary
phylogenetic trees on the same set of taxa $X$ has a constant factor polynomial-time approximation if and only if the problem of computing a minimum-size feedback vertex set in
a directed graph (DFVS) has a constant factor polynomial-time approximation. The latter problem, which asks for a minimum number of vertices to be removed from a directed graph to transform it
into a directed acyclic graph, is one of the problems in Karp's seminal 1972 list of 21 NP-complete problems. However, despite considerable attention from the combinatorial optimization community
it remains to this day unknown whether a constant factor polynomial-time approximation exists for DFVS. Our result thus places the (in)approximability of hybridization number in a much broader complexity context, and as a consequence we obtain that hybridization number inherits inapproximability results from the problem Vertex Cover. On the positive side, we use results from the DFVS literature to give an $\text{O}( \log r \log \log r)$ approximation for hybridization number, where $r$ is the value of an optimal solution to the hybridization number problem.
%\footnote{Email: steven.kelk@maastrichtuniversity.nl,
%l.j.j.v.iersel@gmail.com, nela.lekic@maastrichtuniversity.nl, 	
%linz@informatik.uni-tuebingen.de, celine.scornavacca@univ-montp2.fr, l.stougie@vu.nl}
\end{abstract}

\maketitle

\section{Introduction}
\noindent
The traditional model for representing the evolution of a set  of species $X$ (or, more generally, a set of \emph{taxa}) is the \emph{rooted phylogenetic tree} \cite{MathEvPhyl,reconstructingevolution,SempleSteel2003}. Essentially, this is a rooted tree where the leaves are bijectively labelled by $X$ and the edges are directed away from the unique root. A \emph{binary} rooted phylogenetic tree carries the additional restriction that
the root has indegree zero and outdegree two, leaves have indegree one and outdegree zero,
and all other (internal) vertices have indegree one and outdegree two. Rooted binary phylogenetic trees will have a central role in this article.

In recent years there has been a growing interest in extending the phylogenetic tree model to also incorporate non-treelike evolutionary phenomena such as hybridizations, recombinations and horizontal gene transfers. This has stimulated research into \emph{rooted phylogenetic networks} which generalize rooted phylogenetic trees by also permitting vertices with indegree two or higher, called \emph{reticulation} vertices, or simply \emph{reticulations}. For detailed background information on phylogenetic networks we refer the reader to \cite{husonetalgalled2009,HusonRuppScornavacca10,surveycombinatorial2011,twotrees,Nakhleh2009ProbSolv,Semple2007}.
In a rooted \emph{binary} phylogenetic network the reticulation vertices are all indegree two and outdegree one (and all other vertices obey the usual restrictions of a rooted binary phylogenetic tree).

\begin{figure}
    \centering
    \includegraphics[scale=.5]{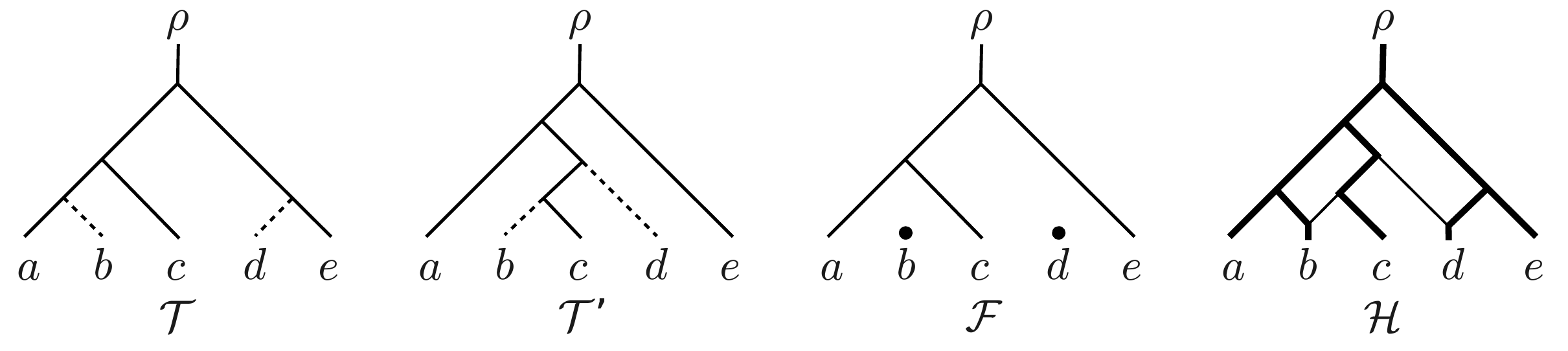}
    \caption{Two phylogenetic trees,~$\cT$ and~$\cT'$, an acyclic agreement forest~$\cF$ for~$\cT$ and~$\cT'$ and a hybridization network~$\cH$ that displays~$\cT$ and~$\cT'$, and has hybridization number~2. All edges are directed away from the root~$\rho$. Forest~$\cF$ can be obtained from either of~$\cT$ and~$\cT'$ by deleting the dashed edges. Bold edges are used in~$\cH$ to illustrate that this network displays~$\cT$.}
    \label{fig:intro}
\end{figure}

Informally, we say that a phylogenetic network $\cN$ on $X$ \emph{displays} a phylogenetic tree $\cT$ on $X$ if it is possible to delete all but one incoming edge of each reticulation vertex of $\cN$ such that, after subsequently suppressing vertices which have indegree and outdegree both equal to one, the tree $\cT$ is obtained (see Figure~\ref{fig:intro}). Following the publication of several seminal articles in 2004-5 (e.g. \cite{baroni05,BaroniEtAl2004}) there has been considerable research interest in the following biologically-inspired question. Given two rooted, binary phylogenetic trees $\cT$ and $\cT'$ on the same set of taxa $X$, what is the minimum number of reticulations required by a phylogenetic network $\cN$ on $X$ which displays both $\cT$ and $\cT'$? This value is often called the \emph{hybridization number} in the literature, and when addressing this specific problem the term \emph{hybridization network} is often used instead of the more general term phylogenetic network. For the purpose of consistency we will henceforth use the term hybridization network in this article.

{\sc MinimumHybridization}, the problem of computing the hybridization number, has been shown to be both NP-hard and APX-hard \cite{bordewich07a}, from which several related phylogenetic network construction techniques also inherit hardness \cite{twotrees,elusiveness}. APX-hardness means that there exists a constant $c>1$ such that the existence of a polynomial-time approximation algorithm that achieves an approximation ratio strictly smaller than $c$ would imply P=NP. As is often the case with APX-hardness results, the value $c$ given in \cite{bordewich07a} is very small, $\frac{2113}{2112}$. It is not known whether {\sc MinimumHybridization} is actually in APX, the class of problems for which polynomial-time approximation algorithms exist that achieve a constant approximation ratio. In fact, there are to date no non-trivial polynomial-time
approximation algorithms, constant factor or otherwise, for {\sc Hybridization number}. This omission stands in stark contrast to other positive results, which we now discuss briefly.

On the fixed parameter tractability (FPT) front - we refer to \cite{downey1999,Flum2006,Gramm2008,niedermeier2006} for an introduction~- a variety of increasingly sophisticated algorithms have been developed. These show that for many practical instances of {\sc MinimumHybridization} the problem can be efficiently solved (to the extent that even enumeration of \emph{all} optimum solutions is often, in practice, tractable)
\cite{bordewich2,bordewich07b,hybridnet,quantifyingreticulation,firststeps,whiddenFixed,whiddenWABI}. Secondly, the problem of computing the \emph{rooted subtree prune and regraft} (rSPR) distance, which bears at least a superficial similarity to the computation of hybridization number, permits a polynomial-time 3-approximation algorithm \cite{rsprFPT,3approxRSPR,whiddenFixed} and efficient FPT algorithms \cite{rsprFPT,whiddenFixed,whiddenRSPRexp}. Why then is it so difficult to give formal performace guarantees for approximating {\sc MinimumHybridization}?

A clue lies in the nature of the abstraction that (with very few exceptions) is used to compute hybridization numbers, the \emph{Maximum Acyclic Agreement Forest} ({\sc MAAF}), introduced in \cite{baroni05} (see Figure~\ref{fig:intro}). Roughly speaking, computing the hybridization number of two trees $\cT$ and $\cT'$ is essentially identical to the problem of cutting $\cT$ and $\cT'$ into as few vertex-disjoint subtrees as possible
such that (i) the subtrees of $\cT$ are isomorphic to the subtrees of $\cT'$ and -
critically - (ii) a specific ``reachability'' relation on these subtrees is acyclic. Condition (ii) seems to be the core of the issue, because without this condition the problem would be no different to the problem of computing the rSPR distance, which as previously mentioned seems to be comparatively
tractable. (Note that the hybridization number of two trees can in general be much larger than their rSPR distance). The various FPT algorithms for computing hybridization number deal with the unwanted cycles in the reachability relation
in a variety of ways but all resort to some kind of brute force analysis
to optimally avoid (e.g. \cite{firststeps}) or break (e.g. \cite{hybridnet,whiddenFixed}) them.

In this article we demonstrate why it is so difficult to deal with the cycles. It turns out that
{\sc MinimumHybridization} is, in an approximability sense, a close relative of the
problem Feedback Vertex Set on directed graphs (DFVS). In this problem we wish to remove a minimum number of vertices from a directed graph to transform it into a directed acyclic graph. DFVS is one of the original
NP-complete problems (it is in Karp's famous 1972 list of 21 NP-complete problems \cite{karp1972}) and is also known to be APX-hard \cite{Kann}. However, despite almost forty years of attention it is still unknown whether
DFVS permits a constant approximation ratio i.e. whether it is in APX. (The undirected
variant of FVS, in contrast, appears to be significantly more tractable. It is 2-approximable even in the weighted case \cite{Bafna1999}).

By coupling the approximability of {\sc MinimumHybridization} to DFVS we show that {\sc MinimumHybridization}
is just as hard as a problem that has so far eluded the entire combinatorial optimization community. Specifically, we show that for every constant $c > 1$ and every $\epsilon > 0$ the existence of a polynomial-time $c$-approximation for {\sc MinimumHybridization}
would imply a polynomial-time $(c + \epsilon)$-approximation for DFVS. In the other
direction we show that, for every $c > 1$, the existence of a polynomial-time $c$-approximation
for DFVS would imply a polynomial-time $6c$-approximation for {\sc MinimumHybridization}. In other words:
DFVS is in APX if and only if {\sc MinimumHybridization} is in APX. Hence a constant factor approximation algorithm for either
algorithm would be a major breakthrough in theoretical computer science.

There are several interesting spin-off consequences of this result, both negative and positive. On the negative side, it is known that there is a very simple parsimonious reduction from the classical problem Vertex Cover to DFVS \cite{karp1972}. Consequently, a $c$-approximation for DFVS entails a $c$-approximation for Vertex Cover, for every $c \geq 1$. For $c < 10\sqrt{5}-21 \approx 1.3606$ there cannot exist a polynomial-time $c$-approximation of Vertex Cover, assuming P $\neq$ NP \cite{dinur,DinurAnnals}. Also, if the Unique Games Conjecture is true then for $c < 2$ there cannot exist a polynomial-time $c$-approximation of Vertex Cover \cite{Khot2008}. (Whether Vertex Cover permits a constant factor approximation ratio strictly smaller than 2 is a long-standing open problem). The main result in this article hence not only shows that {\sc MinimumHybridization} is in APX if and only if DFVS is in APX, but also that {\sc MinimumHybridization} cannot be approximated within a factor of 1.3606, unless P=NP (and not within a factor smaller than 2 if the Unique Games Conjecture is true). This improves significantly on the current APX-hardness threshhold of $\frac{2113}{2112}$.

On the positive side, we observe that already-existing approximation algorithms for {\sc DFVS} can be utilised to give
asymptotically comparable approximation ratios for {\sc MinimumHybridization}. To date the best polynomial-time approximation algorithms for {\sc DFVS} achieve an approximation ratio of $\text{O}( \min\{ \log n \log \log n, \log \tau^{*} \log \log \tau^{*}\} )$, where $n$ is the number of vertices in the graph and $\tau^{*}$ is the optimal fractional solution of the problem (taking the weights of the vertices into account) \cite{dfvsApprox,dfvsSeymour}. We show that this algorithm can be used to give an $\text{O}( \log r \log \log r )$-approximation algorithm for {\sc MinimumHybridization}, where $r$ is the hybridization number of the two input trees. To the best of our knowledge, this is the first non-trivial polynomial-time approximation algorithm for {\sc MinimumHybridization}.

The main result also has interesting consequences for the fixed parameter tractability of {\sc MinimumHybridization}. The inflation factor of 6 in the reduction from {\sc DFVS} to {\sc MinimumHybridization} is very closely linked to a reduction described in \cite{bordewich07b}. The authors in that article showed that the input trees can be reduced to produce a weighted instance containing at most $14r$ taxa. (The fact that the reduced instance is weighted means it cannot be automatically used to obtain a constant-factor approximation algorithm). In this article we sharpen their analysis to show that the reduction they describe actually produces a weighted instance with at most $9r$ taxa. Without this sharpening, the inflation factor we obtain would have been higher than 6. From this analysis it becomes clear that the kernel size has an important role to play in analysing the approximability of {\sc MinimumHybridization}.

This raises some interesting general questions about the linkages between {\sc MinimumHybridization} and {\sc DFVS}. For example, it can be shown that, in a formal sense, a small modification to the reduction described in \cite{bordewich07b} produces a kernel (without weights) of quadratic size. This contrasts sharply with {\sc DFVS}. It is known that {\sc DFVS} is fixed parameter tractable~\cite{dfvsFPT}, but it is \emph{not} known whether {\sc DFVS} permits a polynomial-size kernel. Might {\sc MinimumHybridization} give us new insights into the structure of {\sc DFVS} (and vice-versa)? More generally: within which complexity frameworks is one of the two problems strictly harder than the other?

The structure of this article is as follows. In the next section, we define the considered problems formally and describe the reductions that were used to show that {\sc MinimumHybridization} is fixed parameter tractable. In Section~\ref{sec:kernel}, we show an improved bound on the sizes of reduced instances. Subsequently, we use these results to show an approximation-preserving reduction from {\sc MinimumHybridization} to {\sc DFVS} in Section~\ref{sec:2dfvs} and an approximation-preserving reduction from {\sc DFVS} to {\sc MinimumHybridization} in Section~\ref{sec:2minhybrid}.

\section{Preliminaries}\label{sec:prelim}

{\bf Phylogenetic Trees.}
Throughout the paper, let~$X$ be a finite set of \emph{taxa} (taxonomic units). A {\it rooted binary phylogenetic $X$-tree $\cT$} is a rooted tree whose root has degree two, whose interior vertices have degree three and whose leaves are bijectively labelled by the elements of~$X$. The edges of the tree can be seen as being directed away from the root. The set of leaves of~$\cT$ is denoted as~$\cL(\cT)$. We identify each leaf with its label. We sometimes call a rooted binary phylogenetic $X$-tree a \emph{tree} for short.

In the course of this paper, different types of subtrees play an important role. Let~$\cT$ be a rooted phylogenetic $X$-tree and~$X'$ a subset of~$X$. The minimal rooted subtree of~$\cT$ that connects all leaves in~$X'$ is denoted by $\cT(X')$. Furthermore, the tree obtained from $\cT(X')$ by suppressing all non-root degree-$2$ vertices is the {\it restriction of $\cT$ to $X'$} and is denoted by $\cT|X'$. Lastly, a subtree of~$\cT$ is {\it pendant} if it can be detached from~$\cT$ by deleting a single edge.

{\bf Hybridization Networks.}
A {\it hybridization network $\cH$} on a set~$X$ is a rooted acyclic directed graph, which has a single root of outdegree at least 2, has no vertices with indegree and outdegree both~1, and in which the vertices of outdegree~0 are bijectively labelled by the elements of~$X$. A hybridization network is \emph{binary} if all vertices have indegree and outdegree at most~2 and every vertex with indegree~2 has outdegree~1.

For each vertex~$v$ of $\cH$, we denote by $d^-(v)$ and $d^+(v)$ its indegree and outdegree respectively. If $(u,v)$ is an arc of $\cH$, we say that~$u$ is a \emph{parent} of~$v$ and that~$v$ is a \emph{child} of~$u$. Furthermore, if there is a directed path from a vertex~$u$ to a vertex~$v$, we say that~$u$ is an \emph{ancestor} of~$v$ and that~$v$ is a \emph{descendant} of~$u$.

A vertex of indegree greater than one represents an evolutionary event in which lineages combined, such as a hybridization, recombination or horizontal gene transfer event. We call these vertices {\it hybridization vertices}. To quantify the number of hybridization events, the {\it hybridization number} of a hybridization network $\cH$ with root $\rho$ is given by
$$h(\cH)=\sum_{v\ne\rho}(d^-(v)-1).$$
Observe that $h(\cH)=0$ if and only if~$\cH$ is a tree.

Let $\cH$ be a hybridization network on~$X$ and~$\cT$ a rooted binary phylogenetic $X'$-tree with $X'\subseteq X$. We say that~$\cT$ is {\it displayed} by~$\cH$ if~$\cT$ can be obtained from~$\cH$ by deleting vertices and edges and suppressing vertices with $d^+(v)=d^-(v)=1$ (or, in other words, if a subdivision of~$\cT$ is a subgraph of~$\cH$). Intuitively, if~$\cH$ displays~$\cT$, then all of the ancestral relationships visualized by~$\cT$ are visualized by~$\cH$.

The problem {\sc MinimumHybridization} is to compute the \emph{hybridization number} of two rooted binary phylogenetic $X$-trees~$\cT$ and~$\cT'$, which is defined as
$$h(\cT,\cT')=\min\{h(\cH): \cH \mbox{ is a hybridization network that displays } \cT \mbox{ and }\cT'\},$$
i.e., the minimum number of hybridization events necessary to display two rooted binary phylogenetic trees.

This problem can be formulated as an optimization problem as follows.

\noindent{\bf Problem:} {\sc MinimumHybridization}\\
\noindent {\bf Instance:} Two rooted binary phylogenetic $X$-trees $\cT$ and $\cT'$. \\
\noindent {\bf Solution:} A hybridization network~$\cH$ that displays~$\cT$ and~$\cT'$.\\
\noindent{\bf Objective:} Minimize $h(\cH)$.

If~$\cH$ is a hybridization network that displays~$\cT$ and~$\cT'$, then there also exists a binary hybridization network~$\cH'$ that displays~$\cT$ and~$\cT'$ such that $h(\cH)=h(\cH')$ \cite[Lemma~3]{twotrees}. Hence, we restrict our analysis to binary hybridization networks and will not emphasize again that we only deal with this kind of network.

{\bf Agreement Forests.}
A useful characterization of {\sc MinimumHybridization} in terms of agreement forests was discovered by Baroni et al.~\cite{baroni05}, building on an idea in~\cite{hein96}. Bordewich and Semple used this characterization to show that {\sc MinimumHybridization} is NP-hard. Such agreement forests play a fundamental role in this paper. For the purpose of the upcoming definition and, in fact, much of the paper, we regard the root of a tree~$\cT$ (or network~$\cH$) as a vertex~$\rho$ at the end of a pendant edge adjoined to the original root. Furthermore, we view~$\rho$ as an element of the label set of~$\cT$; thus $\cL(\cT)=X\cup\{\rho\}$.

Let $\cT$ and $\cT'$ be two rooted binary phylogenetic $X$-trees. {A partition $\cF=\{\cL_\rho,\cL_1,\cL_2,\ldots,\cL_k\}$ of $X\cup\{\rho\}$ is an {\it agreement forest} for~$\cT$ and~$\cT'$ if $\rho\in\cL_\rho$ and the following conditions are satisfied:}
\begin{itemize}
\item[(1)] for all $i\in\{\rho,1,2,\ldots,k\}$, we have $\cT|\cL_i\cong\cT'|\cL_i$, and
\item[(2)] the trees in $\{\cT(\cL_i): i\in\{\rho,1,2,\ldots,k\}\}$ and $\{\cT'(\cL_i): i\in\{\rho,1,2,\ldots,k\}\}$ are vertex-disjoint subtrees of $\cT$ and $\cT'$, respectively.
\end{itemize}
In the definition above, the notation $\cong$ is used to denote a graph isomorphism that preserves leaf-labels.

Note that, even though an agreement forest is formally defined as a partition of the leaves, we often see the collection of trees $\{\cT|\cL_\rho,\cT|\cL_1,\ldots,\cT|\cL_k\}$ as the agreement forest. So, intuitively, an agreement forest for~$\cT$ and~$\cT'$ can be seen as a collection of trees that can be obtained from either of~$\cT$ and~$\cT'$ by deleting a set of edges and subsequently ``cleaning up'' by deleting unlabelled vertices and suppressing indegree-1 outdegree-1 vertices (see Figure~\ref{fig:chains}). Therefore, we often refer to the elements of an agreement forest as \emph{components}.

The \emph{size} of an agreement forest~$\cF$ is defined as its number of elements (components) and is denoted by~$|\cF|$.

A characterization of the hybridization number $h(\cT,\cT')$ in terms of agreement forests requires an additional condition. Let $\cF=\{\cL_\rho,\cL_1,\cL_2,\ldots,\cL_k\}$ be an agreement forest for $\cT$ and $\cT'$. Let $G_{\cF}$ be the directed graph that has vertex set $\cF$ and an edge $(\cL_i,\cL_j)$ if and only if $i\neq j$ and at least one of the two following conditions holds 
\begin{itemize}
\item[(1)] the root of $\cT(\cL_i)$ is an ancestor of the root of $\cT(\cL_j)$ in $\cT$;
\item[(2)] the root of $\cT'(\cL_i)$ is an ancestor of the root of $\cT'(\cL_j)$ in $\cT'$.
\end{itemize}
The graph $G_{\cF}$ is called the {\it inheritance graph} associated with $\cF$. We call $\cF$ an {\it acyclic agreement forest} for $\cT$ and $\cT'$ if $G_{\cF}$ has no directed cycles. If~$\cF$ contains the smallest number of elements (components) over all acyclic agreement forests for $\cT$ and $\cT'$, we say that $\cF$ is a {\it maximum acyclic agreement forest} for $\cT$ and $\cT'$. Note that such a forest is called a \emph{maximum} acyclic agreement forest, even though one \emph{minimizes} the number of elements, because in some sense the ``agreement'' is maximized. (Also note that acyclic agreement forests were called \emph{good} agreement forests in~\cite{baroni05}.)

We define $m_a(\cT, \cT')$ to be the number of elements of a maximum acyclic agreement forest for~$\cT$ and~$\cT'$ minus one. Also the problem of computing $m_a(\cT, \cT')$ has an optimization counterpart:

\noindent{\bf Problem:} Maximum Acyclic Agreement Forest ({\sc MAAF})\\
\noindent {\bf Instance:} Two rooted binary phylogenetic $X$-trees $\cT$ and $\cT'$. \\
\noindent {\bf Solution:} An acyclic agreement forest $\cF$ for $\cT$ and $\cT'$. \\
\noindent {\bf Objective:} Minimize $|\cF|-1$.

We minimize $|\cF|-1$, rather than~$|\cF|$, following~\cite{bordewich07a}, because $|\cF|-1$ corresponds to the number of edges one needs to remove from either of the input trees to obtain~$\cF$ (after ``cleaning up'') and because of the relation we describe below between this problem and {\sc MinimumHybridization}. Nevertheless, it can be shown that, from an approximation perspective, it does not matter whether one minimizes~$|\cF|$ or~$|\cF|-1$ (which is not obvious).

\begin{theorem}{\cite[Theorem 2]{baroni05}}\label{t:hybrid}
Let $\cT$ and $\cT'$ be two rooted binary phylogenetic $X$-trees. Then
$$h(\cT,\cT')=m_a(\cT,\cT').$$
\end{theorem}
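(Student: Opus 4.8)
The plan is to prove the two inequalities $h(\cT,\cT') \le m_a(\cT,\cT')$ and $h(\cT,\cT') \ge m_a(\cT,\cT')$ separately, exploiting in both directions the tight correspondence between acyclicity of the inheritance graph $G_\cF$ and acyclicity of a hybridization network.

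For the upper bound $h(\cT,\cT') \le m_a(\cT,\cT')$, I would start from a maximum acyclic agreement forest $\cF = \{\cL_\rho,\cL_1,\ldots,\cL_k\}$ and construct a single hybridization network $\cH$ that displays both $\cT$ and $\cT'$ with $h(\cH)=k=|\cF|-1$. The idea is to take the $k+1$ component trees $\cT(\cL_i)$ as building blocks and reconnect them. Because $G_\cF$ is acyclic, I can topologically sort the components; I would process them in reverse topological order, taking the root component as a backbone and attaching each remaining component $\cL_i$ by introducing a single hybridization vertex whose two incoming arcs record where the root of $\cT(\cL_i)$ sits in $\cT$ and where the root of $\cT'(\cL_i)$ sits in $\cT'$. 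Each non-root component thus contributes exactly one hybridization vertex, so $h(\cH)=k$. The key point to verify is that the resulting directed graph is acyclic: a directed cycle in $\cH$ would force a cyclic chain of ancestor relations among the components under $\cT$ and $\cT'$, i.e.\ a directed cycle in $G_\cF$, contradicting the assumed acyclicity. One then checks that switching the reticulations appropriately recovers $\cT$ and $\cT'$, so $\cH$ indeed displays both.

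For the lower bound $h(\cT,\cT') \ge m_a(\cT,\cT')$, I would take an optimal binary hybridization network $\cH$ displaying $\cT$ and $\cT'$ with $r=h(\cH)$ reticulations, and extract an acyclic agreement forest with at most $r+1$ components. Displaying $\cT$ fixes, at each reticulation, a choice of one incoming arc (a switching $S_\cT$), and likewise $S_{\cT'}$ for $\cT'$. I would cut the network at those reticulations where $S_\cT$ and $S_{\cT'}$ disagree; severing these arcs breaks the displayed subtrees into vertex-disjoint pieces that are simultaneously restrictions of $\cT$ and of $\cT'$, hence an agreement forest $\cF$ with $\cT|\cL_i\cong\cT'|\cL_i$ for each piece. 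Since each reticulation increases the number of pieces by at most one, $|\cF|\le r+1$. Finally, acyclicity of $G_\cF$ is inherited directly from the acyclicity of $\cH$: the ancestor relations among component roots in $\cT$ and $\cT'$ are realized by directed paths in the DAG $\cH$, so a directed cycle in $G_\cF$ would yield a directed cycle in $\cH$, which is impossible. Combining the two inequalities gives $h(\cT,\cT')=m_a(\cT,\cT')$.

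I expect the main obstacle to be the bookkeeping in both constructions that links the abstract ancestor conditions defining $G_\cF$ to genuine directed paths in the network $\cH$: one must argue carefully that the single hybridization vertex attached per component can always be routed without creating a cycle (upper bound), and dually that cutting at the reticulations yields pieces whose mutual reachability in $\cT$ and $\cT'$ never closes a cycle that was not already present in $\cH$ (lower bound). Maintaining the isomorphism condition and vertex-disjointness throughout, while correctly handling the suppression of indegree-one outdegree-one vertices forced by the binary structure of $\cH$, is where the argument requires the most care.
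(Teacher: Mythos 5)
The paper does not prove this statement: it is imported verbatim as \cite[Theorem 2]{baroni05}, and the only in-paper commentary is that the constructive strengthening (Theorem~\ref{t:hybrid2}) ``follows from the proof of \cite[Theorem~2]{baroni05}.'' Your sketch reproduces exactly the standard two-inequality argument of that cited source: assembling a network from a maximum acyclic agreement forest by adding one indegree-two vertex per non-root component in an order compatible with $G_\cF$ (note that you want to process ancestral components \emph{before} their descendants, i.e.\ forward rather than ``reverse'' topological order as the paper orients the edges of $G_\cF$, so that both attachment points already exist when a component is added), and conversely cutting an optimal binary network at the reticulations where the two embeddings disagree to obtain at most $r+1$ parts whose inheritance graph inherits acyclicity from $\cH$. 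The bookkeeping you flag --- locating the attachment points inside earlier components and verifying that the leaf partitions induced by the two embeddings coincide after cutting --- is precisely where the cited proof does its work, so your proposal is correct in approach and matches the source rather than offering an alternative route.
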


\noindent It is this characterization that was used by Bordewich and Semple~\cite{bordewich07a} to show that {\sc MinimumHybridization} is NP-hard. To show that also an approximation for one problem can be used to approximate the other problem, one needs the following slightly stronger result.

\begin{theorem}\label{t:hybrid2} Let $\cT$ and $\cT'$ be two rooted binary phylogenetic $X$-trees. Then
\begin{enumerate}
\item[(i)] from a hybridization network~$\cH$ that displays~$\cT$ and~$\cT'$, one can construct in polynomial time an acyclic agreement forest~$\cF$ for~$\cT$ and~$\cT'$ such that $|\cF|-1\leq h(\cH)$ and
\item[(ii)] from an acyclic agreement forest~$\cF$ for~$\cT$ and~$\cT'$, one can construct in polynomial time a hybridization network~$\cH$ that displays~$\cT$ and~$\cT'$ such that  $h(\cH)\leq |\cF|-1$.
\end{enumerate}
\end{theorem}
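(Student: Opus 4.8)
The plan is to make constructive the two inequalities underlying Theorem~\ref{t:hybrid} and, crucially, to verify that each construction runs in polynomial time on an \emph{arbitrary} input rather than only on an optimal one. Part~(ii) is the ``build a network'' direction and part~(i) the ``extract a forest'' direction; applied to optimal solutions they together recover the equality $h(\cT,\cT')=m_a(\cT,\cT')$, but here I want the per-instance bounds $h(\cH)\le|\cF|-1$ and $|\cF|-1\le h(\cH)$ together with polynomiality.

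For part~(ii), let $\cF=\{\cL_\rho,\cL_1,\ldots,\cL_k\}$ be an acyclic agreement forest. Since $G_\cF$ is acyclic, I would fix a topological order of its vertices with $\cL_\rho$ first, and build $\cH$ incrementally: start from the root component $\cT(\cL_\rho)$ and, processing the remaining components in topological order, graft each $\cL_j$ onto the partial network. The root of $\cT(\cL_j)$ has a parent lying in some component $\cL_a$ (its attachment in $\cT$) and the root of $\cT'(\cL_j)$ has a parent lying in some component $\cL_b$ (its attachment in $\cT'$); both $(\cL_a,\cL_j)$ and $(\cL_b,\cL_j)$ are edges of $G_\cF$, so $\cL_a$ and $\cL_b$ are already present in the partial network. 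I subdivide the two relevant edges to create attachment vertices, introduce one new hybridization vertex fed by those two points, and direct it into the root of the grafted component. This adds exactly one indegree-$2$ vertex per non-root component, so $h(\cH)=k=|\cF|-1$. Acyclicity of $\cH$ is immediate, since every added edge runs from an earlier to a later component in the topological order; and $\cH$ displays $\cT$ (resp.\ $\cT'$) by switching each hybridization vertex to its $\cT$-edge (resp.\ $\cT'$-edge), which reassembles the components exactly as in $\cT$ (resp.\ $\cT'$). The procedure is plainly polynomial.

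For part~(i), I would first reduce to the binary case via \cite[Lemma~3]{twotrees} (a polynomial transformation that preserves $h$), so that $\cH$ has exactly $h(\cH)$ reticulations, each of indegree $2$. Displaying $\cT$ and $\cT'$ amounts to two embeddings, i.e.\ two choices of one incoming edge at each reticulation. Cutting $\cT$ and $\cT'$ at the places where the two embeddings diverge yields a partition of $X\cup\{\rho\}$ into at most $h(\cH)+1$ blocks; I then check that this partition satisfies conditions~(1) and~(2) of an agreement forest, the isomorphism $\cT|\cL_i\cong\cT'|\cL_i$ holding because each block maps to a common region of $\cH$, and vertex-disjointness holding because distinct blocks map to internally disjoint parts of $\cH$. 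The block count gives $|\cF|-1\le h(\cH)$.

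The main obstacle in both directions is the acyclicity bookkeeping. In part~(ii) one must confirm that grafting along a topological order never closes a directed cycle and that each component contributes exactly one reticulation; this is routine but needs the observation that both attachment points always lie in previously placed components. In part~(i) the crux is to show that the extracted $\cF$ is \emph{acyclic}, i.e.\ that $G_\cF$ has no directed cycle. For this I would argue by contradiction: a directed cycle in $G_\cF$ would, by the definition of its edges, give an alternating sequence of ancestor relations in $\cT$ and $\cT'$, and tracing these through the two embeddings inside $\cH$ would produce a closed directed walk in $\cH$, contradicting that $\cH$ is acyclic. Making this translation precise---matching each inheritance-graph edge to a directed path in $\cH$ and splicing these paths into a cycle---is the technical heart of the argument, and is exactly where the per-instance (rather than merely optimal-value) nature of the statement must be respected.
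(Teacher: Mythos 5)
The paper disposes of this theorem in a single sentence, by citing the proof of Theorem~2 of Baroni et al.\ together with the observation that $\cH$ may be assumed binary; your proposal is a reconstruction of exactly that argument, and both directions are in essence correct. Two spots need tightening. In part~(ii), the parent of the root of $\cT(\cL_j)$ need not lie in any component $\cT(\cL_a)$ (it may be a vertex of $\cT$ that belongs to no component); you must instead attach to the component containing the \emph{nearest ancestor} of that root that lies in some component, which still yields an edge $(\cL_a,\cL_j)$ of $G_\cF$ and hence an already-placed component, and you must also insert multiple attachment points on the same edge in an order consistent with $\cT$ (resp.\ $\cT'$) so that the switching argument really reassembles each tree. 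In part~(i), ``cutting $\cT$ and $\cT'$ where the two embeddings diverge'' does not obviously produce a \emph{single} partition common to both trees, nor the bound of $h(\cH)+1$ blocks; the clean construction is to delete \emph{all} incoming edges of every reticulation of the binary $\cH$, so that each resulting component is rooted either at $\rho$ or at one of the $h(\cH)$ reticulations, giving at most $h(\cH)+1$ components whose leaf sets form one partition of $X\cup\{\rho\}$. Conditions (1) and (2) and the acyclicity of $G_\cF$ then follow by the lifting arguments you sketch, using that any directed path of $\cH$ can only enter a component through that component's root.
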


This result follows from the proof of \cite[Theorem~2]{baroni05} using the observation above that we may assume that~$\cH$ is binary.

We now formally introduce the last optimization problem discussed in this paper. A \emph{feedback vertex set (FVS)} of a directed graph~$D$ is a subset of the vertices that contains at least one vertex from each directed cycle in~$D$. Equivalently, a subset~$V'$ of the vertices of~$D$ is a feedback vertex set if and only if removing~$V'$ from~$D$ gives a directed acyclic graph. The \emph{minimum feedback vertex set problem on directed graphs} (DFVS) is defined as: given a directed graph~$D$, find a feedback vertex set of~$D$ that has minimum size.

{\bf Reductions and Fixed Parameter Tractability.}
After establishing the NP-hardness of {\sc MinimumHybridization}, the same authors showed that this problem is also fixed parameter tractable~\cite{bordewich07b}. They show how to reduce a pair of rooted binary phylogenetic $X$-trees~$\cT$ and~$\cT'$, such that the number of leaves of the reduced trees is bounded by $14h(\cT,\cT')$, whence a brute-force algorithm can be used to solve the reduced instance, giving a fixed parameter tractable algorithm.

To describe the reductions, we need some additional definitions. Let $\cT$ be a rooted binary phylogenetic $X$-tree. For $n\geq 2$, an {\it $n$-chain} of $\cT$ is an $n$-tuple $(a_1,a_2,\ldots,a_n)$ of elements of $\cL(\cT)\setminus\{\rho\}$ such that the parent of $a_1$ is either the same as the parent of $a_2$ or the parent of $a_1$ is a child of the parent of $a_2$ and, for each $i\in\{2,3,\ldots,n-1\}$, the parent of $a_i$ is a child of the parent of $a_{i+1}$; i.e., the subgraph induced by $a_1,a_2,\ldots,a_n$ and their parents is a {\em caterpillar} (see Figure~\ref{fig:chains}).

Now, let $A=(a_1,a_2,\ldots,a_n)$ be an $n$-chain that is common to two rooted binary phylogenetic $X$-trees $\cT$ and $\cT'$ {with $n\geq 2$}, and let $\cF$ be an acyclic agreement forest for $\cT$ and $\cT'$. We say that $A$ {\it survives} in $\cF$ if there exists an element in $\cF$ that is a superset of $\{a_1,a_2,\ldots,a_n\}$, while we say that $A$ is {\it atomized} in $\cF$ if each element in $\{a_1,a_2,\ldots,a_n\}$ is a singleton in $\cF$ (see Figure~\ref{fig:chains}). Furthermore, if~$T$ is a common pendant subtree of $\cT$ and $\cT'$, then we say that~$T$ \emph{survives} in~$\cF$ if there is an element of~$\cF$ that is a superset of the label set of~$T$.

\begin{figure}
    \centering
    \includegraphics[scale=.5]{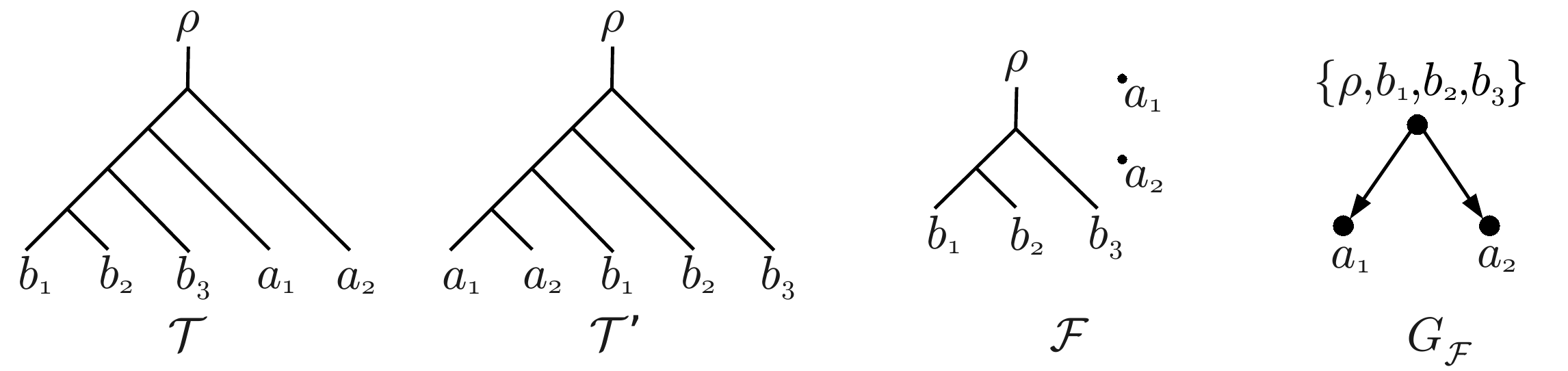}
    \caption{Two input trees~$\mathcal{T}$ and $\mathcal{T}'$, an agreement forest~$\mathcal{F}$ for~$\mathcal{T}$ and $\mathcal{T}'$ and the inheritance graph~$G_\mathcal{F}$. The trees have two common chains: $(a_1,a_2)$ and $(b_1,b_2,b_3)$. In the agreement forest~$\mathcal{F}$, chain $(a_1,a_2)$ is atomized while chain $(b_1,b_2,b_3)$ survives. The agreement forest~$\mathcal{F}$ is acyclic because~$G_\mathcal{F}$ is acyclic.}
    \label{fig:chains}
\end{figure}

The following lemma basically shows that we can reduce subtrees and chains. It differs slightly from the corresponding lemma in~\cite{bordewich07b} because we consider approximations while Bordewich and Semple considered only optimal solutions in that paper.

\begin{lemma}\label{lem:survives}
Let~$\cF$ be an acyclic agreement forest for two trees~$\cT$ and~$\cT'$. Then there exists an acyclic agreement forest~$\cF'$ for~$\cT$ and~$\cT'$ with~$|\cF'|\leq |\cF|$ such that
\begin{itemize}
\item[1.] every common pendant subtree of~$\cT$ and~$\cT'$ survives in~$\cF'$ and
\item[2.] every common $n$-chain of~$\cT$ and~$\cT'$, with~$n\geq 3$, either survives or is atomized in~$\cF'$.
\end{itemize}
Moreover, $\cF'$ can be obtained from~$\cF$ in polynomial time.
\end{lemma}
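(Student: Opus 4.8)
The plan is to prove the lemma by treating the two structural reductions—common pendant subtrees and long common chains—separately, each by a local surgery on an acyclic agreement forest that never increases the number of components and preserves acyclicity of the inheritance graph. Throughout I would work with the partition description $\cF=\{\cL_\rho,\cL_1,\ldots,\cL_k\}$ and argue that the desired $\cF'$ is reached after finitely many such surgeries, each computable in polynomial time.

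First I would handle common pendant subtrees (part~1). Let $T$ be a common pendant subtree of $\cT$ and $\cT'$ with label set $Y$. If $T$ already survives in $\cF$, there is nothing to do; otherwise the labels of $Y$ are split across several components $\cL_{i_1},\ldots,\cL_{i_t}$. The key move is to \emph{merge} all labels of $Y$ into a single component. Concretely, I would form $\cF'$ by taking one of the components meeting $Y$, say $\cL_{i_1}$, removing from every $\cL_{i_j}$ all labels lying in $Y$, and re-adding the entire set $Y$ to (the modified) $\cL_{i_1}$. Because $T$ is pendant and identical in both trees, $\cT|(\text{modified }\cL_{i_1})$ and $\cT'|(\text{modified }\cL_{i_1})$ remain isomorphic, and vertex-disjointness is preserved since we are only regrouping labels hanging below a single edge. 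This operation strictly decreases the number of components (or keeps it equal), so $|\cF'|\le|\cF|$. The only substantive thing to check is that acyclicity of $G_{\cF'}$ is maintained: I would argue that contracting several components into one can be mimicked at the level of $G_\cF$ by an edge-contraction that cannot create a new directed cycle unless one already existed, using the fact that all merged labels share a common ancestor in \emph{both} trees (the root of $T$).

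Next I would address long common chains (part~2). Given a common $n$-chain $A=(a_1,\ldots,a_n)$ with $n\ge 3$ that in $\cF$ is neither surviving nor atomized, I would show how to push it toward one of the two extremes without increasing $|\cF|$ or destroying acyclicity. The natural approach is a potential/exchange argument: if the chain elements are distributed among components in a ``mixed'' way, I would identify a single edge-cut move on $\cT$ and $\cT'$ that either pulls all chain elements together into one component (moving toward survival) or separates them into singletons (moving toward atomization), choosing whichever direction does not increase the component count. Because the chain's parents form a caterpillar in both trees, the ancestor relations among chain elements are linear, which is what lets me control the effect on $G_\cF$: the edges of the inheritance graph incident to chain-components behave monotonically along the chain, so the chosen consolidation cannot introduce a new cycle.

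The hard part will be the acyclicity bookkeeping in the chain case. Unlike pendant subtrees, the elements of a chain do not share a single common ancestor that dominates all of them in both trees simultaneously, so merging or atomizing chain elements can, a priori, rewire many edges of $G_\cF$ at once. I expect the crux is to prove a lemma of the form: \emph{if $G_\cF$ is acyclic, then after the chain-consolidation move the inheritance graph $G_{\cF'}$ is also acyclic}, and I would prove it by exhibiting a topological order of $G_\cF$ and showing it (or a simple modification of it) remains valid for $G_{\cF'}$, exploiting the caterpillar structure to order the chain-components consistently with both trees. A secondary technical point is to confirm that only finitely many surgeries are needed—this follows because each move either reduces $|\cF|$ or strictly increases a monotone measure of ``how consolidated'' the subtrees and chains are, giving termination and hence the claimed polynomial-time construction.
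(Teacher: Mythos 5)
Your outline identifies the right two surgeries, but note first that the paper does not prove this lemma from scratch at all: it observes that the proof of Lemma~3.1 of Bordewich and Semple carries over verbatim to non-optimal acyclic agreement forests and unweighted inputs. Your self-contained route is therefore different in kind, and as written it has two genuine gaps. The first is in the pendant-subtree step: your acyclicity argument rests on the claim that contracting several vertices of $G_{\cF}$ into one ``cannot create a new directed cycle unless one already existed.'' That principle is false for general DAGs (contract the two endpoints of a directed path $a\to b\to c$ and you obtain a $2$-cycle through $b$). What actually saves the construction is specific structure that you do not invoke: by vertex-disjointness across the pendant edge of $T$, at most one component meeting the label set $Y$ of $T$ can also contain a label outside $Y$, and after the merge the new component either has the same root (in both trees) as that unique straddling component, or equals $Y$ and then has outdegree~$0$ in the new inheritance graph because no other component can have its root inside $T$. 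With that observation $G_{\cF'}$ is essentially an induced subgraph of $G_{\cF}$; the ``shared common ancestor'' remark alone does not get you there.

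The second and more serious gap is that part~2 is asserted rather than proved. Choosing ``whichever direction does not increase the component count'' presupposes exactly what must be shown: that for every partially broken chain at least one of the two moves is simultaneously (i) legal as an agreement forest (conditions (1) and (2)), (ii) acyclicity-preserving, and (iii) non-increasing in $|\cF|$. This is not automatic: atomizing an $n$-chain whose elements currently occupy only two or three components typically \emph{increases} $|\cF|$, while merging can violate vertex-disjointness when two distinct components each contain both chain and non-chain labels (one entering the caterpillar from the top, one from the bottom, possibly positioned differently in $\cT$ and $\cT'$). The required case analysis must show that these two obstructions cannot occur together, and this is precisely where the hypothesis $n\geq 3$ enters; your argument never uses $n\geq 3$, which is a strong indication it cannot be complete, since the analogous claim is exactly what fails for $2$-chains --- that failure is why the paper's chain reduction outputs \emph{weighted} $2$-chains and must separately impose the ``survives or is atomized'' condition on them. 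The deferred topological-order lemma for acyclicity is likewise the crux rather than bookkeeping. As it stands, the proposal is a plausible roadmap for reproving Bordewich--Semple's Lemma~3.1, but the load-bearing steps are missing.
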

\begin{proof}
Follows from the proof of~\cite[Lemma 3.1]{bordewich07b}. There are two differences with~\cite[Lemma 3.1]{bordewich07b}. Firstly, our result is slightly simpler because we consider two unweighted trees~$\cT$ and~$\cT'$, while the authors of~\cite{bordewich07b} allow the unreduced trees~$\cT$ and $\cT'$ to already have weights on 2-chains. Secondly,~\cite[Lemma 3.1]{bordewich07b} only shows the result for optimal agreement forests. However, a careful analysis of the proof of~\cite[Lemma 3.1]{bordewich07b} shows that it can also be used to prove this lemma.
\end{proof}

We are now ready to formally describe the aforementioned tree reductions. Let $\cT$ and $\cT'$ be two rooted binary phylogenetic $X$-trees, $P$ a set that is initially empty and $w:P\rightarrow\mathbb{Z}^+$ a weight function on the elements in $P$.

\noindent{\bf Subtree Reduction.} Replace any maximal pendant subtree with at least two leaves that is common to $\cT$ and $\cT'$ by a single leaf with a new label.

\noindent{\bf Chain Reduction.} Replace any maximal $n$-chain $(a_1,a_2,\ldots,a_n)$, with~$n\geq 3$, that is common to $\cT$ and $\cT'$ by a $2$-chain with new labels $a$ and $b$. Moreover, add a new element $(a,b)$ with weight $w(a,b)=n-2$ to $P$.

Let $\cS$ and $\cS'$ be two rooted binary phylogenetic $X'$-trees that have been obtained from $\cT$ and $\cT'$ by first applying subtree reductions as often as possible and then applying chain reductions as often as possible. We call $\cS$ and $\cS'$ the {\it reduced tree pair} with respect to  $\cT$ and $\cT'$. Note that a reduced tree pair always has an associated set $P$ that contains one element for each chain reduction applied. Note that $\cS$ and $\cS'$ are unambiguously defined (up to the choice of the new labels) because maximal common pendant subtrees do not overlap and maximal common chains do not overlap. Moreover, applications of the chain reduction can not create any new common pendant subtrees with at least two leaves. Hence, it is not necessary to apply subtree reductions again after the chain reductions.

Recall that every common $n$-chain, with~$n\geq 3$, either survives or is atomized (Lemma~\ref{lem:survives}). In~$\cS$ and~$\cS'$, such chains have been replaced by weighted 2-chains. Therefore, we are only interested in acyclic agreement forests for~$\cS$ and~$\cS'$ in which these weighted 2-chains either survive or are atomized. We therefore introduce a third notion of an agreement forest. Recall that~$P$ is the set of reduced (i.e. weighted) 2-chains. We say that an agreement forest~$\cF$ for~$\cS$ and~$\cS'$ is {\it legitimate} if it is acyclic and every chain~$(a,b)\in P$ either survives or is atomized in~$\cF$.

\noindent Let $\cF$ be an agreement forest for $\cS$ and $\cS'$. The {\it weight} of $\cF$, denoted by $w(\cF)$, is defined to be
\[w(\cF)=|\cF|-1+\sum_{(a,b)\in P: \mbox{ }(a,b)\textnormal{ is atomized in }\cF} w(a,b). \]
Lastly, we define $f(\cS,\cS')$ to be the minimum weight of a legitimate agreement forest for $\cS$ and $\cS'$.

Then, the following lemma says that computing the hybridization number of~$\cT$ and~$\cT'$ is equivalent to computing the minimum weight of a legitimate agreement forest for $\cS$ and $\cS'$. The second part of the lemma is necessary to show that an approximation to a reduced instance $\cS$ and $\cS'$ can be used to obtain an approximation to the original instance $\cT$ and $\cT'$.

\begin{lemma}\label{l:preserving}
Let $\cT$ and $\cT'$ be a pair of rooted binary phylogenetic $X$-trees and let $\cS$ and $\cS'$ be the reduced tree pair with respect to $\cT$ and $\cT'$. Then
\begin{enumerate}
\item[(i)] $h(\cS,\cS') \leq f(\cS,\cS') = h(\cT,\cT')$ and
\item[(ii)] given a legitimate agreement forest~$\cF_S$ for~$\cS$ and~$\cS'$, we can find, in polynomial time, an acyclic agreement forest~$\cF$ for $\cT$ and $\cT'$ such that $|\cF| - 1 = w(\cF_S)$.
\end{enumerate}
\end{lemma}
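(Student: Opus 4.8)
The plan is to prove the two parts of Lemma~\ref{l:preserving} separately, establishing the chain of (in)equalities in (i) by combining two directions (a ``forwards'' construction that turns a legitimate agreement forest for $\cS,\cS'$ into an acyclic agreement forest for $\cT,\cT'$ of no greater weight, and a ``backwards'' construction in the other direction), and proving (ii) constructively as the effective content underlying the forwards direction. The key conceptual device throughout is that a chain reduction replacing a maximal common $n$-chain $(a_1,\dots,a_n)$ by a weighted $2$-chain $(a,b)$ with weight $n-2$ is an exact bookkeeping trick: if the $2$-chain \emph{survives} in $\cF_S$, then in the unreduced instance the entire $n$-chain can survive inside a single component at no additional cost (the $n-2$ ``missing'' leaves rejoin the surviving block); if the $2$-chain is \emph{atomized}, then the $n$-chain must be broken into singletons, contributing exactly $n-2$ extra components beyond the two already counted, which is precisely the weight $w(a,b)$ that $w(\cF_S)$ charges. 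Symmetrically, common pendant subtrees collapse to a single leaf with no weight because, by Lemma~\ref{lem:survives}, we may always assume such subtrees survive.

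For part (ii), I would first reverse the chain and subtree reductions one at a time, maintaining an agreement forest for the partially-unreduced trees. When reversing a subtree reduction, I replace the single leaf standing for a common pendant subtree $T$ by $T$ itself inside whichever component contains that leaf; since Lemma~\ref{lem:survives} guarantees the subtree survives, this changes neither the number of components nor acyclicity of the inheritance graph, because $T$ is pendant and hence attaches below a single vertex in each tree. When reversing a chain reduction for $(a,b)\in P$ with original length $n$: if $(a,b)$ survives in $\cF_S$, I insert the $n-2$ intermediate chain elements into the surviving component, again leaving $|\cF|$ and acyclicity unchanged; if $(a,b)$ is atomized, I make each of the $n$ chain leaves its own singleton, which increases the component count by exactly $(n-2)=w(a,b)$ relative to the two singletons already present for $a$ and $b$. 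Summing over all reductions gives $|\cF|-1 = w(\cF_S)$, and I would argue that acyclicity of $G_\cF$ follows from acyclicity of $G_{\cF_S}$ because the newly inserted chain/subtree vertices induce only ``local'' ancestor relations that refine, but do not create new cycles among, the existing components; this monotonicity of the inheritance relation under un-reduction is the technical heart.

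With (ii) in hand, the inequality $h(\cT,\cT') = m_a(\cT,\cT') \leq w(\cF_S)$ for \emph{any} legitimate $\cF_S$ follows by taking the acyclic agreement forest $\cF$ it produces and applying Theorem~\ref{t:hybrid}; choosing $\cF_S$ optimal gives $h(\cT,\cT')\leq f(\cS,\cS')$. For the reverse inequality $f(\cS,\cS')\leq h(\cT,\cT')$, I would start from an optimal acyclic agreement forest $\cF$ for $\cT,\cT'$, apply Lemma~\ref{lem:survives} to assume every common pendant subtree survives and every common $n$-chain ($n\geq 3$) either survives or is atomized, and then run the reductions \emph{forwards}: collapse each surviving pendant subtree to its representative leaf, and collapse each common $n$-chain to the weighted $2$-chain, letting the resulting $2$-chain survive (respectively be atomized) exactly when the original chain survived (respectively was atomized). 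The resulting $\cF_S$ is legitimate, and the same bookkeeping shows $w(\cF_S) = |\cF|-1 = h(\cT,\cT')$, giving $f(\cS,\cS')\leq h(\cT,\cT')$ and hence equality. Finally $h(\cS,\cS')\leq f(\cS,\cS')$ is immediate, since every legitimate agreement forest for $\cS,\cS'$ is in particular an acyclic agreement forest for $\cS,\cS'$, so the minimum of $|\cF_S|-1$ over all acyclic forests is at most the minimum of the (never-smaller) weight $w(\cF_S)$ over legitimate ones.

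The main obstacle I anticipate is not the counting — which is forced by the definition of $w(\cF_S)$ — but verifying that acyclicity is preserved in \emph{both} directions of the chain and subtree un-reduction. Concretely, I must check that inserting a run of chain leaves back into a surviving component, or splitting an atomized chain into singletons scattered across the caterpillar, does not introduce a directed cycle in the inheritance graph $G_\cF$ that was absent in $G_{\cF_S}$; this requires a careful case analysis of how the roots of the restored components sit relative to one another in $\cT$ and $\cT'$, and it is here that the precise maximality of common chains and the structure of caterpillars (ensuring the reinstated leaves do not straddle other components) must be invoked. I expect this to route directly through the corresponding argument in the proof of \cite[Lemma~3.1]{bordewich07b}, with the only genuine extra work being the verification that the weight identity $|\cF|-1 = w(\cF_S)$ holds with equality (not merely inequality) in the reconstruction direction, as demanded by part (ii).
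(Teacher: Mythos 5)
Your proposal is correct and follows essentially the same route as the paper: the paper's own proof simply defers to \cite[Proposition 3.2]{bordewich07b} (with the inequality $h(\cS,\cS')\leq f(\cS,\cS')$ immediate from the definition of $f$), and your forward/backward un-reduction bookkeeping, with acyclicity preservation deferred to the argument of \cite[Lemma 3.1]{bordewich07b}, is precisely the content of that cited proof spelled out. The only quibble is that in part (ii) the survival of a restored pendant subtree holds by construction (it is reinserted whole into the component containing its representative leaf), so the appeal to Lemma~\ref{lem:survives} there is unnecessary; this does not affect correctness.
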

\begin{proof}
In part (i), the inequality follows directly from the definition of~$f$ while the equality is equivalent to \cite[Proposition 3.2]{bordewich07b} if the unreduced trees~$\cT$ and $\cT'$ are unweighted (i.e. if~$P$ is initially empty). Part (ii) follows from the proof of \cite[Proposition 3.2]{bordewich07b}.
\end{proof}

The fixed parameter tractability of {\sc MinimumHybridization} now follows from the next lemma, which bounds the number of leaves in a reduced tree pair.

\begin{lemma}\cite[Lemma 3.3]{bordewich07b}\label{l:oldFPTfactor}
Let $\cT$ and $\cT'$ be two rooted binary phylogenetic $X$-trees, $\cS$ and $\cS'$ the reduced tree pair with respect to $\cT$ and $\cT'$, and~$X'$ the label set of~$\cS$ and~$\cS'$. If $h(\cT,\cT')>0$, then $|X'|<14h(\cT,\cT')$.
\end{lemma}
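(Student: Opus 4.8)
The plan is to adapt the leaf-counting argument of Bordewich and Semple: fix a \emph{small} agreement forest for the reduced pair $\cS,\cS'$ and charge the leaves of $\cS$ to the components of that forest and to the edges cut in producing it, exploiting the fact that full reduction forbids large common structures. Concretely, I would set $r=h(\cT,\cT')$, which equals $f(\cS,\cS')$ by Lemma~\ref{l:preserving}(i), and fix a minimum-weight legitimate agreement forest $\cF=\{\cL_\rho,\cL_1,\ldots,\cL_k\}$ for $\cS$ and $\cS'$, so that $w(\cF)=r$. Since $w(\cF)=|\cF|-1+\sum_{(a,b)\text{ atomized}}w(a,b)$ and every summand is nonnegative, this gives $|\cF|-1\le r$; hence $\cF$ has at most $r+1$ components, and (by the stated interpretation of the objective $|\cF|-1$ as the number of edges removed from an input tree) producing $\cF$ deletes at most $r$ edges from $\cS$ and at most $r$ from $\cS'$. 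Because the components partition $X'\cup\{\rho\}$, bounding $|X'|$ reduces to bounding $\sum_i|\cL_i|=|X'|+1$.

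The heart of the argument is to charge these leaves using the two properties of a fully reduced pair: $\cS$ and $\cS'$ share no common pendant subtree with at least two leaves and no common chain with at least three elements. Singleton components contribute at most $r+1$ leaves and are charged to themselves. For a component $C=\cS|\cL_i\cong\cS'|\cL_i$ with $|\cL_i|\ge 2$, the point is that $C$ cannot sit undisturbed inside both trees: if $C$ were pendant in both $\cS$ and $\cS'$ it would be a forbidden common pendant subtree, so in at least one tree a cut edge is incident to the interior of $C$. More importantly, any three leaves of $C$ forming a caterpillar inside $C$ would constitute a common $3$-chain of $\cS$ and $\cS'$ \emph{unless}, in $\cS$ or in $\cS'$, further components attach between consecutive spine vertices of that caterpillar — and each such attachment is the top endpoint of one of the cut edges. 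Since common $3$-chains are forbidden, every caterpillar of length three inside a multi-leaf component must be interrupted by a cut edge in $\cS$ or $\cS'$. This lets me charge the leaves of the multi-leaf components to the at most $r$ cut edges of $\cS$ and the at most $r$ cut edges of $\cS'$, plus a constant number of boundary leaves per component.

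Summing the three kinds of charge — the singletons, the leaves chargeable to cut edges, and the constantly many boundary leaves per component (the top of the component and the ends of each maximal surviving chain) — yields a bound of the form $|X'|+1\le c_1(r+1)+c_2 r$, and the precise accounting of how many leaves each cut edge and each component boundary may absorb is what produces the constant, giving $|X'|<14r$ when $r>0$. I expect this bookkeeping to be the main obstacle: one must verify that the three mechanisms (singletons, the pendant-subtree obstruction, and the chain obstruction) jointly account for every leaf while no cut edge is charged more than a bounded number of times, and in particular treat correctly the weighted $2$-chains, both of whose leaves survive in $\cS$ and must be counted even though the chain reduction has already collapsed every longer common chain.
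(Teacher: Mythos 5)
Your plan reproduces the leaf-charging strategy of the original source, but be aware that this paper never reproves the $14h(\cT,\cT')$ bound at all: Lemma~\ref{l:oldFPTfactor} is cited from \cite{bordewich07b}, and the paper's own contribution is the sharper bound $|X'|<9h(\cT,\cT')$ of Theorem~\ref{thm:newKernel}, obtained by a completely different route. There one takes an optimal hybridization network $\cH$ for $\cS$ and $\cS'$, passes to its $h(\cH)$-generator $\cH'$, computes $|E(\cH')|=4r_0+3r_1-1$ by a total-degree identity, and observes that each edge side of $\cH'$ carries at most two leaves (a third would create a surviving common $3$-chain) and each node side exactly one. That single counting identity replaces the entire charging scheme you describe, which is precisely why it yields a better constant.

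The genuine gap in your proposal is that the quantitative content of the lemma --- the constant $14$ --- is never derived. You reduce the problem to a bound ``of the form $|X'|+1\le c_1(r+1)+c_2r$'' and then assert that ``the precise accounting \ldots is what produces the constant''; for a statement whose only content is that constant, this accounting \emph{is} the proof. To close it you would need, at minimum: (i) a bound on the number of attachment points (suppressed degree-two vertices on the components) by the number of cut edges in each tree, i.e.\ at most $2(|\cF|-1)\le 2r$ in total; (ii) the observation that every cherry of a multi-leaf component must contain an attachment point in $\cS$ or $\cS'$ (else it is a common pendant subtree), and likewise every uninterrupted $3$-chain (else it survives as a common $3$-chain), noting that the no-$3$-chain condition is essential because a caterpillar component has only one cherry and the pendant-subtree condition alone bounds nothing; and (iii) a verification that no attachment point is charged more than a bounded number of leaves, with the surviving weighted $2$-chains counted correctly. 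Without these counts the argument could just as plausibly terminate at $20r$, or not terminate at all. If you want a complete and self-contained proof, the generator argument of Theorem~\ref{thm:newKernel} is both shorter and stronger.
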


\noindent We show in Section~\ref{sec:kernel} that the reduced trees have at most $9h(\cT,\cT')$ leaves. This improved
bound will be important in the approximation-preserving reductions we give later in the paper.

\section{An improved bound on the size of reduced instances of {\sc MinimumHybridization}}\label{sec:kernel}

We start with some definitions and an intermediate result. The bound on the size of the reduced instance will be proven in Theorem \ref{thm:newKernel}.

An $r$-\emph{reticulation generator} (for short, {\it $r$-generator}) is defined to be a directed acyclic multigraph with  a single vertex of indegree~0 and  outdegree~1, precisely~$r$ reticulation vertices (indegree~2 and outdegree at most~1), and apart from that only vertices of indegree~1 and outdegree~2 \cite{KelkScornavacca2011}.  The \emph{sides} of an $r$-generator are defined as the union of its edges (the \emph{edge} sides) and its vertices of indegree-2 and outdegree-0 (the \emph{node sides}). Adding a set of labels $L$ to an edge side $(u, v)$ of an $r$-generator involves subdividing $(u, v)$ to a path of $|L|$ internal vertices and, for each such internal vertex $w$, adding a new leaf $w'$, an edge $(w, w')$, and labeling $w'$ with some taxon from $L$ (such that~$L$ bijectively labels the new leaves). On the other hand, adding a label $l$ to a node side $v$ consists of adding a new leaf $y$, an edge $(v, y)$ and labeling $y$ with $l$.

\begin{lemma}\label{lem:generator}
Let $\cT$ and $\cT'$ be two rooted binary phylogenetic $X$-trees with no common pendant subtrees with at least~2 leaves and let~$\cH$ be a hybridization network that displays~$\cT$ and~$\cT'$ with a minimum number of hybridization vertices. Then the network~$\cH'$ obtained from~$\cH$ by deleting all leaves and suppressing each resulting vertex~$v$ with $d^+(v)=d^-(v)=1$ is an $h(\cH)$-generator.
\end{lemma}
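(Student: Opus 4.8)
The plan is to show that the network $\cH'$ obtained from $\cH$ by stripping away the leaves (and cleaning up) satisfies all three defining properties of an $h(\cH)$-generator: it is a directed acyclic multigraph with a single root of indegree $0$ and outdegree $1$, it has exactly $h(\cH)$ reticulation vertices, and all of its remaining non-root, non-reticulation vertices have indegree $1$ and outdegree $2$. Since $\cH$ is a binary hybridization network (we may assume this by the remark following the definition of {\sc MinimumHybridization}), its internal vertices already have bounded degrees, so the main work is bookkeeping about how leaf-deletion and suppression of degree-2 vertices affect these degrees, together with one genuinely nontrivial point about connectivity.

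First I would set up the degree analysis. In $\cH$, every vertex has indegree and outdegree at most $2$, each indegree-$2$ vertex has outdegree $1$, the root $\rho$ sits at the end of a pendant edge with $d^-(\rho)=0$, and the leaves (labelled by $X$) have outdegree $0$. Deleting all leaves removes precisely the outdegree-$0$ labelled vertices; this can only lower the outdegrees of their parents. After deletion, any vertex $v$ with $d^+(v)=d^-(v)=1$ is suppressed. I would argue that suppression preserves acyclicity and the indegrees/outdegrees of all surviving vertices (suppressing a degree-$2$ vertex simply splices its single in-edge and single out-edge into one edge), and that it can introduce multi-edges, which is exactly why the target object is a multigraph rather than a simple graph. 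The reticulation count is the cleanest part: the hybridization number is defined as $h(\cH)=\sum_{v\ne\rho}(d^-(v)-1)$, which for a binary network counts exactly the indegree-$2$ vertices; I would check that leaf-deletion and suppression neither create nor destroy indegree-$2$ vertices (a suppressed vertex has indegree $1$, so it is never a reticulation, and splicing does not raise any indegree), so $\cH'$ retains exactly $h(\cH)$ reticulations.

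The step I expect to be the main obstacle is verifying that $\cH'$ has the correct \emph{uniform} internal structure, namely that after clean-up every non-root vertex that is not a reticulation has indegree $1$ and outdegree exactly $2$ — in particular that no vertex is left with outdegree $1$ or outdegree $0$. Outdegree-$1$ internal vertices are handled by the suppression rule, but I must rule out that the process strands a vertex with outdegree $0$ (other than reticulations, which are allowed outdegree $\le 1$) or a ``useless'' subtree consisting entirely of internal vertices all of whose descendants were leaves. This is where the hypothesis that $\cT$ and $\cT'$ have \emph{no common pendant subtree with at least two leaves} must be used: if $\cH'$ contained a pendant internal subtree that got entirely emptied of labels, or if some reticulation became redundant, one could argue that $\cH$ was not using a minimum number of hybridization vertices, contradicting minimality, or alternatively that two leaves hang beneath a common structure forcing a common pendant subtree in both $\cT$ and $\cT'$. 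I would therefore couple a minimality argument (no reticulation of $\cH$ is superfluous in an optimal network) with the no-common-pendant-subtree hypothesis to conclude that every internal vertex of $\cH$ lies on a path leading to at least one reticulation and that each edge side and node side carries at most a controlled number of labels.

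Concretely, I would organize the proof as: (1) observe $\cH$ may be taken binary and acyclic; (2) show leaf-deletion followed by suppression preserves acyclicity and the single outdegree-$1$ root; (3) show the reticulation vertices are exactly preserved, giving the count $h(\cH)$; (4) show all surviving internal vertices have indegree $1$ and outdegree $2$, invoking minimality of $\cH$ together with the absence of common pendant subtrees to exclude degenerate configurations; and (5) conclude $\cH'$ matches the definition of an $r$-generator with $r=h(\cH)$. The bulk of the writing will be in step (4), and the cleanest formulation is likely a short argument by contradiction against the minimality of the number of hybridization vertices in $\cH$.
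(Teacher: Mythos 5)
Your plan matches the paper's proof: the degree bookkeeping follows from binarity of~$\cH$, the reticulation count is preserved by construction, and the crux is exactly your step~(4), which the paper settles by observing that an indegree-1, outdegree-0 vertex of~$\cH'$ would have no reticulation among its descendants in~$\cH$, so the subnetwork below it would be a rooted tree with at least two leaves and hence a common pendant subtree of~$\cT$ and~$\cT'$ --- contradicting the hypothesis. The only difference is that your hedge toward a contradiction with minimality is unnecessary: the paper's argument uses only the no-common-pendant-subtree assumption and the binarity of~$\cH$, not the minimality of the number of hybridization vertices.
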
 
\begin{proof}
By construction, $\cH'$ contains the same number of hybridization vertices as~$\cH$. Additionally, 
by the definition of a binary hybridization network, no vertex has indegree~2 and outdegree greater than~1, indegree greater than~2, or indegree and outdegree both~1. Now, we claim that~$\cH'$ does not have any vertex with indegree~1 and outdegree~0. To see that this holds, suppose that there exists a vertex~$v$ in~$\cH'$ such that $d^-(v)= 1$ and $d^+(v)= 0$. Then~$v$ has two children in~$\cH$. Since $d^+(v)= 0$ in~$\cH'$, no hybridization vertex can be reached by a directed path from~$v$ in~$\cH$. This means that the subnetwork of~$\cH$ rooted at~$v$ is actually a rooted tree, contradicting the fact that~$\cT$ and~$\cT'$ do not have any common pendant subtree with two or more leaves. We may thus conclude that~$\cH'$ conforms to the definition of an $h(\cH)$-generator.
\end{proof}

Reversely, by inverting the operations of suppression and deletion,~$\cH$ can be obtained from the $h(\cH)$-generator~$\cH'$ associated with~$\cH$ by adding leaves to its sides (in the sense described at the start of this section).\footnote{A similar technique was described in~\cite{KelkScornavacca2011} in a somewhat different context.}\\

\begin{theorem}\label{thm:newKernel}
Let $\cT$ and $\cT'$ be two rooted binary phylogenetic $X$-trees and~$\cS$ and~$\cS'$ the reduced tree pair on~$X'$ with respect to~$\cT$ and~$\cT'$. If $h(\cT,\cT')>0$, then $|X'|< 9h(\cT,\cT')$.
\end{theorem}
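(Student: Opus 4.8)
The plan is to exploit the reticulation-generator structure of Lemma~\ref{lem:generator} together with the fact that both reductions have been applied exhaustively. Write $r=h(\cT,\cT')$ and $r'=h(\cS,\cS')$. By Lemma~\ref{l:preserving}(i), $r'=h(\cS,\cS')\le f(\cS,\cS')=h(\cT,\cT')=r$, and in fact $r'\ge 1$: if $\cS$ and $\cS'$ were isomorphic, the one-component forest would be legitimate of weight $0$, giving $f(\cS,\cS')=0=h(\cT,\cT')$ and contradicting $h(\cT,\cT')>0$. Since the subtree reduction has been applied as often as possible, $\cS$ and $\cS'$ share no common pendant subtree with two or more leaves, so Lemma~\ref{lem:generator} applies: taking a hybridization network $\cH$ that displays $\cS$ and $\cS'$ with a minimum number $h(\cH)=r'$ of hybridization vertices, the graph $\cH'$ obtained by stripping the leaves of $\cH$ and suppressing degree-two vertices is an $r'$-generator whose indegree-$0$ vertex is $\rho$. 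Reversing this operation, every taxon of $X'$ is a leaf of $\cH$ sitting on some side of $\cH'$, so I would bound $|X'|$ by the total number of leaves the sides can carry.

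Next I would count the sides of an $r'$-generator by a degree-sum argument. Let $t$ be the number of indegree-$1$ outdegree-$2$ vertices and $s$ the number of reticulations of outdegree~$1$, so that the remaining $r'-s$ reticulations have outdegree~$0$ and form the node sides. Equating the edge count computed from indegrees, $t+2r'$, with that computed from outdegrees, $1+2t+s$, yields $t=2r'-s-1$ and a total of $4r'-s-1$ edges. Hence $\cH'$ has exactly $4r'-s-1$ edge sides and $r'-s$ node sides.

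The crux is to bound the number of leaves carried by each side. A node side is a reticulation of outdegree~$0$ in $\cH'$ whose outdegree in $\cH$ was exactly~$1$; if its unique child were not a single leaf, the subnetwork below it would be a common pendant subtree with at least two leaves, impossible after subtree reduction. Thus each node side carries exactly one leaf. For an edge side I would argue that its attached leaves form a \emph{common chain} of $\cS$ and $\cS'$: they hang, in a fixed order, off a single directed path of $\cH$, and since extracting either $\cS$ or $\cS'$ from $\cH$ only deletes reticulation in-edges and suppresses degree-two vertices, that path and the order of its pendant leaves survive intact in both trees. Because the chain reduction has been applied exhaustively, every maximal common chain of $\cS$ and $\cS'$ has length at most~$2$, so each edge side carries at most two leaves. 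Proving this ``common chain'' claim rigorously---that the pendant leaves of one edge side really do appear consecutively and identically ordered in both $\cS$ and $\cS'$---is the step I expect to be the main obstacle, since it is exactly where the generator geometry must be reconciled with the chain-reduction bound.

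Finally I would sum over the sides: each of the $4r'-s-1$ edge sides carries at most two leaves and each of the $r'-s$ node sides exactly one, so
\[
|X'|\le 2(4r'-s-1)+(r'-s)=9r'-3s-2\le 9r'-2\le 9r-2<9r,
\]
using $s\ge 0$ and $r'\le r$. This is the desired bound. The slack $-3s-2$ makes clear that the improvement from $14r$ to $9r$ is precisely the ``at most two leaves per edge side, exactly one per node side'' accounting unlocked by exhaustive subtree and chain reduction.
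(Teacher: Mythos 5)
Your proposal is correct and follows essentially the same route as the paper: apply Lemma~\ref{lem:generator} to the reduced pair, count edge and node sides of the generator by a degree-sum argument, and bound the leaves per side by two (exhaustive chain reduction) and one (exhaustive subtree reduction), respectively; your totals $4r'-s-1$ edge sides, $r'-s$ node sides, and $9r'-3s-2$ leaves coincide exactly with the paper's $4r_0+3r_1-1$, $r_0$, and $9r_0+6r_1-2$ under the substitution $s=r_1$, $r'-s=r_0$. The ``common chain on an edge side'' step you flag as the main obstacle is asserted with the same brevity in the paper itself, and your explicit check that $h(\cS,\cS')\geq 1$ is a small point the paper leaves implicit.
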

\begin{proof}
Let $\cH'$ be the $h(\cH)$-generator that is associated with a hybridization network $\cH$ for $\cS$ and $\cS'$ whose number of hybridization vertices is minimized, i.e., $h(\cH)=h(\cS,\cS')$. By definition, $\cH'$ has the following vertices:
\begin{itemize}
\item $r=h(\cH)$ reticulations; in particular $r_0$ reticulations with indegree 2 and outdegree 0 and $r_1$ reticulations with indegree 2 and outdegree 1,
\item $s$ vertices with indegree 1 and outdegree 2, and
\item one root vertex with indegree 0 and outdegree 1.
\end{itemize}
The total indegree of~$\cH'$ is $2r_0 + 2r_1 + s$. The total outdegree of~$\cH'$ is $ r_1 + 2s + 1$. Hence, $2r_0 + 2r_1 + s = r_1 + 2s + 1$ implying $s = 2r_0 + r_1 - 1$. Moreover, the total number of edges of~$\cH'$, $|E(\cH')|$, equals the total indegree and, therefore,
\begin{equation}\label{eq:edges}
|E(\cH')|=2r_0 + 2r_1 + s=2r_0 + 2r_1 +2r_0 + r_1 - 1=4r_0+3r_1-1.\\
\end{equation}

Note that for each of the $r_0$ node sides~$v$ in $\cH'$ the child of $v$ in $\cH$ is a single leaf. Moreover, each edge side in $\cH'$ cannot correspond to a directed path in $\cH$ that consists of more than three edges since, otherwise, $\cS$ and $\cS'$ would have a common $n$-chain, with $n\geq 3$. Thus,~$\cH$ can have at most two leaves per edge side of~$\cH'$ and one leaf per node side of~$\cH'$. Thus, the total number of leaves~$|X'|$ of~$\cH$ is bounded by

\begin{align*}
|X'|&\leq 2|E(\cH')| + r_0 \\
&= 2(4r_0 + 3r_1 - 1) + r_0\\
&= 9r_0 + 6r_1 - 2\\
& \leq 9r - 2\\
&< 9h(\cS,\cS')\\
&\leq 9h(\cT,\cT'),
\end{align*}
where the last inequality follows from Lemma~\ref{l:preserving}.
\end{proof}

\section{An approximation-preserving reduction from {\sc MinimumHybridization} to {\sc DFVS}}\label{sec:2dfvs}

We start by proving the following theorem, which refers to {\sc wDFVS}, the \emph{weighted} variant of {\sc DFVS} where every vertex is attributed a weight and the weight of a feedback vertex set is simply the sum of the weights of its constituent vertices. Later in the section we will prove a corresponding result for {\sc DFVS}.

\begin{theorem}\label{t:Hybrid2DFVS} If, for some~$c\geq 1$, there exists a polynomial-time $c$-approximation for {\sc wDFVS}, then there exists a polynomial-time $6c$-approximation for {\sc MinimumHybridization}.
\end{theorem}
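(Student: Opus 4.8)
The plan is to reduce an instance of {\sc MinimumHybridization} to a weighted instance of {\sc DFVS}, run the assumed $c$-approximation on the latter, and convert the result back. The bridge between the two problems must be the inheritance graph $G_\cF$: a legitimate agreement forest is acyclic precisely when $G_\cF$ has no directed cycle, so breaking cycles in a directed graph is exactly the operation needed to turn an ordinary (non-acyclic) agreement forest into an acyclic one. Thus I expect the workflow to be: (1) reduce $\cT,\cT'$ to the reduced pair $\cS,\cS'$ via Lemma~\ref{l:preserving}, so that $h(\cT,\cT') = f(\cS,\cS')$ and the reduced trees have fewer than $9h(\cT,\cT')$ leaves by Theorem~\ref{thm:newKernel}; (2) compute, in polynomial time, some cheap \emph{not-necessarily-acyclic} agreement forest for $\cS,\cS'$ and build its inheritance graph; (3) use the $c$-approximation for {\sc wDFVS} to find a near-minimum set of components whose removal (i.e.\ whose further cutting into singletons) destroys all directed cycles; and (4) lift the resulting legitimate agreement forest back to an acyclic agreement forest for $\cT,\cT'$ via part~(ii) of Lemma~\ref{l:preserving}.

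\textbf{Where the factor $6$ comes from.} The key quantitative step is bounding, in terms of the optimum $r = h(\cT,\cT') = f(\cS,\cS')$, both the size of the {\sc wDFVS} instance's optimal solution and the cost of the agreement forest we start with in step~(2). Since the reduced instance has at most $9r$ leaves, a maximum acyclic agreement forest for $\cS,\cS'$ has a bounded number of components, and I expect that an \emph{rSPR}-type (cyclic) agreement forest, together with an appropriate choice of which components to designate as ``deletable'' vertices in the DFVS graph, yields a weighted directed graph whose minimum feedback vertex set has weight at most $r$ while the whole construction inflates the objective by a factor of at most $6$. Concretely, each cut in the reduced trees can cost weight, and the $9r$ bound on leaves translates (via the relationship between edges cut, chain weights, and components) into the claim that the optimal {\sc wDFVS} value is within a factor $6$ of $r$; applying the $c$-approximation then gives a feedback vertex set of weight at most $6cr$, hence a legitimate agreement forest of weight at most $6cr$, hence (by Lemma~\ref{l:preserving}(ii)) an acyclic agreement forest for $\cT,\cT'$ of size at most $6cr + 1$, i.e.\ $m_a \le 6c\cdot h(\cT,\cT')$, which by Theorem~\ref{t:hybrid} is a $6c$-approximation of the hybridization number.

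\textbf{The main obstacle} I anticipate is the precise construction in step~(2)--(3): one must encode ``make the agreement forest acyclic'' as a genuine vertex-deletion problem on a directed graph, and the natural encoding is delicate because removing a component from the forest is not the same as removing a vertex from $G_\cF$ --- cutting a component typically splits it and changes the inheritance relations among the remaining pieces. The correct move is presumably to fix a fine ``base'' forest (where chains are atomized and subtrees survive, per Lemma~\ref{lem:survives}) and let the DFVS vertices correspond to the choices of \emph{keeping} chains intact, so that deleting a DFVS vertex corresponds to atomizing a chain and thereby deleting the cycles it participates in. Getting the weights right so that the weight of a feedback vertex set equals (up to the factor $6$) the weight $w(\cF)$ of the corresponding legitimate agreement forest, and verifying that \emph{every} directed cycle in $G_\cF$ is hit by some deletable vertex (so that an acyclic forest really does correspond to a feedback vertex set and vice versa), is the technical heart of the argument. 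I would prove this correspondence as a two-way inequality: first that any legitimate agreement forest induces a feedback vertex set of comparable weight, and conversely that any feedback vertex set induces, via Lemma~\ref{lem:survives}, a legitimate agreement forest of comparable weight, with the factor $6$ absorbing the discrepancy between ``weight of atomized chains'' and ``number of components''.
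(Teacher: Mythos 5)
Your high-level architecture is the same as the paper's: take the chain forest $\cB_\cT$ (every maximal common chain one component, every other taxon a singleton), treat its inheritance graph as a weighted DFVS instance in which deleting a chain-vertex means atomizing that chain, bound the optimum of this auxiliary problem by $6\,h(\cT,\cT')$, and lift the approximate solution back. However, the step you defer as ``the technical heart'' is genuinely missing and does not come for free. The size of a $\cB_\cT$-splitting is $s+(\#\text{surviving chains})+\sum_{\text{atomized}}n_i$, where $s$ is the number of non-chain singletons, whereas the natural weighting (each chain-vertex gets weight equal to its length) makes the FVS weight equal only to $\sum_{\text{atomized}}n_i$. A $c$-approximation of that quantity is \emph{not} automatically a $c$-approximation of the splitting size, because the additive term ``one component per surviving chain'' is invisible to the FVS objective. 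The paper fixes this with a gadget you do not describe: every chain-vertex $v$ receives a barred companion $\bar v$ of weight $1$ joined to $v$ by a $2$-cycle, forcing any feasible FVS to pay either $n$ (atomize) or $1$ (survive) for each chain, so that FVS weight equals splitting size minus $s$ \emph{exactly} and $k+s-1<c(k^*+s)=c\cdot\opt$. You would also need the content of Lemma~\ref{l:bTprops} --- that singletons and atomized chain pieces have in- or out-degree $0$ in the inheritance graph, so every directed cycle can indeed be hit by atomizing chains --- which you flag as something to verify but do not argue.

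The derivation of the constant $6$ is also not as you describe. It does not follow from the $9r$ leaf bound of Theorem~\ref{thm:newKernel}; rather, the paper charges the elements of $\cB_\cT$ to the sides of the $h(\cS,\cS')$-generator to get $|\cB_\cT|\le|E(\cH')|+r_0=5r_0+3r_1-1<5\,h(\cT,\cT')$ (Lemma~\ref{l:B_bound}), and then superimposes the at most $h(\cT,\cT')$ edge cuts defining a maximum acyclic agreement forest onto the $|\cB_\cT|-1$ cuts defining $\cB_\cT$, checking that the resulting forest is both acyclic and a legitimate refinement of $\cB_\cT$ (every chain survives or is atomized, via Lemma~\ref{lem:survives}), to conclude $\opt<h(\cT,\cT')+5\,h(\cT,\cT')=6\,h(\cT,\cT')$ (Lemma~\ref{l:bSplitting}). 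Both the $5r$ charging argument and this superposition step are absent from your sketch, and without them the factor $6$ is unsupported.
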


Throughout this section, let~$\cT$ and~$\cT'$ be two rooted binary phylogenetic $X$-trees, and let $\cS$ and $\cS'$ be the reduced tree pair on $X'$ with respect to $\cT$ and $\cT'$. Using Lemma~\ref{lem:survives}, we assume throughout this section without loss of generality that~$\cT$ and~$\cT'$ do not contain any common pendant subtrees with at least two leaves. Thus, the reduced tree pair $\cS$ and $\cS'$ can be obtained from $\cT$ and $\cT'$ by applying the chain reduction only.

Before starting the proof, we need some additional definitions and lemmas. We say that a common chain $(a,b)$ of $\cS$ and $\cS'$ is a {\it reduced chain} if it is not a common chain of $\cT$ an $\cT'$. Otherwise, $(a,b)$ is an {\it unreduced chain}. Furthermore, a taxon $\ell\in X'\cup\{\rho\}$, is a {\it non-chain taxon} if it does not label a leaf of a reduced or unreduced chain of $\cS$ and $\cS'$. Now, let $\cB_\cS$ be the forest that exactly contains the following elements:
\begin{enumerate}
\item for each non-chain taxon $\ell$ of $\cS$ and $\cS'$, a {\it non-chain element} $\{\ell\}$, and
\item for each reduced and unreduced chain $(a,b)$ of $\cS$ and $\cS'$, an element $\{a,b\}$.
\end{enumerate}
Clearly, $\cB_\cS$ is an agreement forest for $\cS$ and $\cS'$, and we refer to it as a {\it chain forest} for $\cS$ and $\cS'$.
Now, obtain $\cB_\cT$ from $\cB_\cS$ by replacing  each element in $\cB_\cS$ that contains two labels of a reduced chain, say $(a,b)$, of $\cS$ and $\cS'$ with the label set that precisely contains all labels of the common $n$-chain that has been reduced to $(a,b)$ in the course of obtaining $\cS$ and $\cS'$ from $\cT$ and $\cT'$, respectively. The set $\cB_\cT$ is an agreement forest for $\cT$ and $\cT'$, and we refer to it as a {\it chain forest} for  $\cT$ and $\cT'$. Since the chain reduction can be performed in polynomial time~\cite{bordewich07b}, the chain forests $\cB_\cS$ and $\cB_\cT$ can also be calculated in polynomial time from $\cT$ and $\cT'$. Lastly, each element in $\cB_\cT$ whose members label the leaves of a common $n$-chain in $\cT$ and $\cT'$ with $n\geq 2$  is referred to as a {\it chain element}.

The next lemma bounds the number of elements in a chain forest.

\begin{lemma}\label{l:B_bound}
Let $\cT$ and $\cT'$ be two rooted binary phylogenetic $X$-trees. Let $\cS$ and $\cS'$ be the reduced tree pair with respect to $\cT$ and $\cT'$. Furthermore, let $\cB_\cS$ and $\cB_\cT$ be the chain forests for $\cS$ and $\cS'$, and $\cT$ and $\cT'$, respectively. Then $|\cB_\cT| =|\cB_\cS|< 5h(\cT,\cT')$.
\end{lemma}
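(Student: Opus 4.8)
The equality $|\cB_\cT| = |\cB_\cS|$ I would dispose of first, as it is immediate from the construction: $\cB_\cT$ is obtained from $\cB_\cS$ by replacing, for each reduced chain $(a,b)$, the two-element component $\{a,b\}$ by the label set of the $n$-chain that was reduced to it, while leaving every other component untouched. This is a one-to-one replacement of components, so the number of components is unchanged, and it suffices to bound $|\cB_\cS|$.

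For that bound the plan is to re-use the generator machinery of Theorem~\ref{thm:newKernel}. Since in this section we assume that $\cT$ and $\cT'$ (hence $\cS$ and $\cS'$) have no common pendant subtree with at least two leaves, Lemma~\ref{lem:generator} applies: I would take a hybridization network~$\cH$ for $\cS$ and $\cS'$ with a minimum number $h(\cH)=h(\cS,\cS')=r=r_0+r_1$ of reticulations and pass to the associated $r$-generator~$\cH'$. Recall from~(\ref{eq:edges}) that~$\cH'$ has $|E(\cH')|=4r_0+3r_1-1$ edge sides and $r_0$ node sides, hence $5r_0+3r_1-1$ sides in total.

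The structural heart of the argument is that the leaves of~$\cH$ attached to a single edge side of~$\cH'$ form a common chain of~$\cS$ and~$\cS'$: they all hang off one directed path, so in every tree displayed by~$\cH$, in particular in~$\cS$ and in~$\cS'$, they appear in the same order as a caterpillar. Because the chain reduction has removed all common chains of length~$\geq 3$, each edge side therefore carries at most two leaves, and if it carries exactly two then these constitute a (maximal) common $2$-chain, i.e.\ exactly one chain component of~$\cB_\cS$. With this in hand I would define a map sending each component of~$\cB_\cS$ to a side of~$\cH'$: a singleton $\{\ell\}$ to the side carrying~$\ell$, a chain component $\{a,b\}$ to the side carrying~$a$, and the root component $\{\rho\}$ to the edge side incident to the root of~$\cH'$ (note that~$\rho$ cannot share an edge side with any other leaf, since~$\rho$ never belongs to a chain). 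Using that two leaves on one edge side are forced into the same component, and that maximal common chains are vertex-disjoint, I would verify that this map is injective. Hence $|\cB_\cS|$ is at most the number of sides, and since $3r_1-1<5r_1$ we obtain
\[
|\cB_\cS|\ \leq\ |E(\cH')|+r_0\ =\ 5r_0+3r_1-1\ <\ 5(r_0+r_1)\ =\ 5\,h(\cS,\cS')\ \leq\ 5\,h(\cT,\cT'),
\]
where the last inequality is Lemma~\ref{l:preserving}(i).

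The step I expect to require the most care is the injectivity of this component-to-side map, which is exactly where the constant~$5$ (rather than something larger) is forced. It rests on two points that must be argued cleanly: that two leaves sharing an edge side lie in a single component (the caterpillar/common-chain observation together with the fact that all maximal common chains of~$\cS$ and~$\cS'$ have length~$2$ after reduction), and the bookkeeping for the root component and for chains whose two leaves happen to lie on different sides. Here the non-overlap of maximal common chains is precisely what rules out two distinct components being sent to the same side, and it is what allows the contribution of~$\{\rho\}$ to be absorbed into the side count rather than adding a spurious~$+1$. The caterpillar observation itself is routine; pinning down the placement of~$\rho$ and of the "split" chains tightly enough to exclude every collision is the delicate part.
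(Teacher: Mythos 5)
Your proposal is correct and follows essentially the same route as the paper: the equality $|\cB_\cT|=|\cB_\cS|$ is immediate from the construction, and the bound comes from the $h(\cS,\cS')$-generator $\cH'$ via $|\cB_\cS|\leq |E(\cH')|+r_0=5r_0+3r_1-1<5h(\cS,\cS')\leq 5h(\cT,\cT')$, with each node side carrying one leaf and each edge side at most two leaves forming a common $2$-chain (hence one component). The paper simply asserts the side-to-component correspondence where you spell out the injectivity bookkeeping, so your version is, if anything, slightly more careful.
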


\begin{proof}
By construction of $\cB_\cT$ from $\cB_\cS$, it immediately follows that $|\cB_\cT| =|\cB_\cS|$. To show that $|\cB_\cS|< 5h(\cT,\cT')$ let $\cH$ be a hybridization network that displays $\cS$ and $\cS'$ such that its number of hybridization vertices is minimized over all such networks. Furthermore, let $\cH'$ be the $h(\cH)$-generator associated with $\cH$.
As in the proof of Theorem \ref{thm:newKernel}, let $r_0$ be the number of node sides, i.e. reticulations with indegree~2 and outdegree~0, in $\cH'$ and let $r_1$ be the number of reticulations in $\cH'$ with indegree~2 and outdegree~1. Again, $r_0 + r_1 = h(\cH')= h(\cS,\cS')$. Recall that, to obtain~$\cH$ from~$\cH'$, we add one leaf to each node side of~$\cH'$, corresponding to a singleton in~$\cB_\cS$, and at most two leaves to each edge side of~$\cH'$. Each edge side of $\cH'$ to which we add two taxa corresponds to a 2-chain of $\cS$ and $\cS'$ and, therefore, to a single element in $\cB_\cS$. Hence, using \eqref{eq:edges} and Lemma~\ref{l:preserving}, we have
\[|\cB_\cT|=|\cB_\cS|  \leq  |E(\cH')| +r_0 = 5r_0 + 3r_1 -1 < 5(r_0+r_1) = 5 h(\cS,\cS') \leq 5 h(\cT,\cT').\]
\end{proof}

Consider again the chain forest $\cB_\cT$ for $\cT$ and $\cT'$. We define a {\it $\cB_\cT$-splitting} as an acyclic agreement forest for $\cT$ and $\cT'$ that can be obtained from $\cB_\cT$ by repeated replacements of a chain element $\{a_1,a_2,\ldots,a_n\}$ with the elements $\{a_1\},\{a_2\},\ldots,\{a_n\}$.

\begin{lemma}\label{l:bTprops}
Let $\cB_\cT$ be the chain forest for two rooted binary phylogenetic $X$-trees $\cT$ and $\cT'$. Let $\{a_1,a_2,\ldots,a_n\}$ be a chain  element in $\cB_\cT$, and let {$\cL_j$ be a non-chain element in $\cB_\cT$}. Furthermore, let $\cB_\cT'=(\cB_\cT-\{\{a_1,a_2,\ldots,a_n\}\})\cup\{\{a_1\},\{a_2\},\ldots,\{a_n\}\}$. Then
\begin{enumerate}
\item no directed cycle of $G_{\cB_\cT'}$ passes through an element of $\{\{a_1\},\{a_2\},\ldots,\{a_n\}\}$ and
\item no directed cycle of $G_{\cB_\cT}$ passes through {$\cL_j$}.
\end{enumerate}
\end{lemma}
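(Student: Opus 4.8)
The plan is to exploit the fact that every element named in the statement is a \emph{singleton}, and to classify such singletons according to whether they contain a genuine taxon or the root marker~$\rho$. Recall that an edge $(\cL_i,\cL_k)$ of an inheritance graph is present precisely when the root of $\cT(\cL_i)$ is a (proper) ancestor of the root of $\cT(\cL_k)$ in~$\cT$, or the analogous relation holds in~$\cT'$. The key observation driving both parts is that if $\cL_i=\{x\}$ is a singleton, then $\cT(\{x\})$ and $\cT'(\{x\})$ are the single vertices $x$, so the root of each is $x$ itself.

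First I would treat a singleton $\{x\}$ with $x\in X$, i.e. a genuine taxon. Since $x$ is a leaf of both~$\cT$ and~$\cT'$, it has no proper descendants in either tree, so $x$ is a proper ancestor of no vertex. Hence $\{x\}$ has out-degree zero in the inheritance graph; it is a sink and therefore lies on no directed cycle. For part~1 this already suffices: each $a_i$ is a taxon labelling a leaf of a common chain, so every $\{a_i\}$ is a sink in $G_{\cB_\cT'}$, and no directed cycle of $G_{\cB_\cT'}$ can pass through it.

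For part~2 I would apply the same observation to the non-chain element $\cL_j=\{\ell\}$, taking care of the one case the sink argument does not cover, namely $\ell=\rho$. When $\ell\in X$ the element is a sink exactly as above. When $\ell=\rho$, the root~$\rho$ sits above every other vertex of~$\cT$ and~$\cT'$, so no vertex is a proper ancestor of~$\rho$; then $\{\rho\}$ has in-degree zero, is a source, and again lies on no directed cycle. In either case no directed cycle of $G_{\cB_\cT}$ passes through~$\cL_j$.

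The only genuine subtlety, and the single place where care is needed, is the exceptional status of~$\rho$ in part~2: the clean ``a leaf is a sink'' argument fails for $\{\rho\}$, and one must instead observe that $\rho$'s position as the common root makes $\{\rho\}$ a source. Everything else is a direct reading of the definition of the inheritance graph, so I do not expect any serious obstacle beyond flagging this case split.
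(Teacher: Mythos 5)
Your proof is correct and follows essentially the same route as the paper's: the paper likewise argues that each $\{a_i\}$ and each non-chain singleton $\{\ell\}$ with $\ell\neq\rho$ labels a leaf and hence has outdegree $0$ in the inheritance graph, while $\{\rho\}$ has indegree $0$, so none of these vertices can lie on a directed cycle. Your explicit handling of the $\rho$ case as a source is exactly the case split the paper makes.
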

\begin{proof}
{By the definition of $\cB_\cT$, note that $|\cL_j|=1$. If $\cL_j=\{\rho\}$, then the indegree of $\cL_j$ is 0 in $G_{\cB_\cT}$. Otherwise, if $\cL_j\ne\{\rho\}$, then its element labels a leaf of $\cT$ and $\cT'$  and, thus the outdegree of $\cL_j$ is 0 in $G_{\cB_\cT}$.} Furthermore, since each element in $\{\{a_1\},\{a_2\},\ldots,\{a_n\}\}$ also labels a leaf of $\cT$ and $\cT'$, the outdegree of the vertices $a_1,a_2,\ldots,a_n$ in $G_{\cB_\cT'}$ is 0. This establishes the lemma.
\end{proof}

Let $\opt$ denote the size of a $\cB_\cT$-splitting of smallest size.

\begin{lemma}\label{l:bSplitting}
Let $\cT$ and $\cT'$ be two rooted binary phylogenetic $X$-trees, and let $\cB_\cT$ be the chain forest for $\cT$ and $\cT'$. Then, $\opt<6h(\cT,\cT')$.
\end{lemma}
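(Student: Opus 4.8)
We must show $\opt < 6h(\cT,\cT')$, where $\opt$ denotes the size of a smallest $\cB_\cT$-splitting. The plan is to start from a minimum-weight legitimate agreement forest and show that it can be massaged into a $\cB_\cT$-splitting without increasing its size too much, then invoke the bound on $|\cB_\cT|$ from Lemma~\ref{l:B_bound}.

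Let me sketch how I would prove this.

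**Strategy.** The key quantitative input is Lemma~\ref{l:B_bound}, which tells us $|\cB_\cT| = |\cB_\cS| < 5h(\cT,\cT')$. A $\cB_\cT$-splitting is obtained from $\cB_\cT$ by atomizing some chain elements, so its size is $|\cB_\cT|$ plus the total number of ``extra'' pieces created by splitting. The plan is to produce \emph{one} particular $\cB_\cT$-splitting whose size we can control, and thereby upper-bound $\opt$. The natural candidate is the acyclic agreement forest $\cF$ coming from Lemma~\ref{l:preserving}(ii) applied to a minimum-weight legitimate agreement forest $\cF_S$ for $\cS$ and $\cS'$: recall $|\cF|-1 = w(\cF_S) = f(\cS,\cS') = h(\cT,\cT')$. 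However, $\cF$ need not literally be a $\cB_\cT$-splitting, so the main work is to relate it to one.

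**Key steps, in order.**
First I would take $\cF_S$ a legitimate agreement forest for $\cS,\cS'$ of minimum weight, so that by Lemma~\ref{l:preserving}(i) we have $w(\cF_S) = f(\cS,\cS') = h(\cT,\cT')$. By legitimacy, each reduced $2$-chain $(a,b)\in P$ either \emph{survives} or is \emph{atomized} in $\cF_S$; the weight formula charges $w(a,b)=n-2$ exactly when it is atomized. I would build a $\cB_\cT$-splitting $\cF^\ast$ directly from $\cB_\cT$ by atomizing precisely those chain elements of $\cB_\cT$ whose corresponding chain is atomized (or more generally ``cut'') in $\cF_S$, and leaving intact those that survive. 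The crux is a bookkeeping comparison: for each chain element of $\cB_\cT$ on $n$ leaves that gets atomized, $\cB_\cT$ gains $n-1$ extra components, and this should be paid for by the weight contribution $w(a,b)=n-2$ that $\cF_S$ incurs for atomizing the same chain, plus a bounded overhead. I would also need to confirm that the resulting $\cF^\ast$ is genuinely acyclic (hence really a $\cB_\cT$-splitting); here Lemma~\ref{l:bTprops} is designed to help, since it guarantees that atomized chain taxa and non-chain singletons lie on no directed cycle of the inheritance graph, so splitting never creates a cycle.

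**Putting it together.** Assembling these bounds, I expect an estimate of the form $\opt \le |\cF^\ast| \le |\cB_\cT| + (\text{splitting cost}) \le |\cB_\cT| + w(\cF_S) = |\cB_\cT| + h(\cT,\cT')$, and then combining with Lemma~\ref{l:B_bound}'s bound $|\cB_\cT| < 5h(\cT,\cT')$ yields $\opt < 5h(\cT,\cT') + h(\cT,\cT') = 6h(\cT,\cT')$, as desired. The arithmetic is clean precisely because $5+1=6$, which strongly suggests this is the intended decomposition.

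**Main obstacle.** The hard part will be the exact accounting of the splitting cost and verifying acyclicity of $\cF^\ast$. Specifically, I must check that atomizing a chain on $n$ leaves contributes an amount bounded by the corresponding term in $w(\cF_S)$, reconciling the ``$n-1$ new components'' against the ``$w(a,b)=n-2$'' weight — the off-by-one must be absorbed correctly, presumably into the slack in the strict inequality of Lemma~\ref{l:B_bound}. The other delicate point is that $\cF_S$ lives over the \emph{reduced} trees $\cS,\cS'$ while $\cF^\ast$ lives over $\cT,\cT'$; I must ensure that whether a chain survives/atomizes in $\cF_S$ translates faithfully to the chain element of $\cB_\cT$ remaining intact or atomized, and that the inheritance-graph acyclicity transfers across the reduction. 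Lemma~\ref{l:bTprops} isolates exactly the pieces (singletons and atomized chain taxa) whose removal or splitting cannot introduce cycles, so the acyclicity argument should reduce to observing that the surviving structure of $\cF^\ast$ mirrors the acyclic $\cF_S$.
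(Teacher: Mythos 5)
Your high-level plan is the right one, and it matches the paper's decomposition exactly: exhibit a single $\cB_\cT$-splitting of size at most $|\cB_\cT|+h(\cT,\cT')$ and combine with Lemma~\ref{l:B_bound} to get $5h+h=6h$. However, the two points you yourself flag as ``the hard part'' are genuine gaps, and the tools you propose for closing them would not work. First, acyclicity: Lemma~\ref{l:bTprops} only says that singletons (non-chain elements and the leaves of an atomized chain) lie on no directed cycle of the inheritance graph; it says nothing about whether the \emph{surviving} chain components of your $\cF^\ast$ induce an acyclic inheritance graph, which is where a cycle could actually live. Second, the accounting: the per-chain discrepancy between the $n-1$ extra components created by atomizing an $n$-chain and the weight $w(a,b)=n-2$ cannot be ``absorbed into the slack of Lemma~\ref{l:B_bound}'' --- that slack is a one-off additive term, not a per-chain one. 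Moreover, your construction is under-specified for unreduced $2$-chains, which are covered neither by legitimacy (they are not in $P$) nor by Lemma~\ref{lem:survives} (which requires $n\geq 3$), so a $2$-chain may be neither surviving nor atomized in $\cF_S$ and it is unclear what $\cF^\ast$ does with it.

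The paper closes both gaps at once with a single idea you are missing: work with edge cuts rather than with the weighted reduced instance. Take a maximum acyclic agreement forest $\cF_\cT$ for $\cT$ and $\cT'$ (normalized via Lemma~\ref{lem:survives}), realize it and $\cB_\cT$ as edge-deletion sets $E_{\cF_\cT}$ and $E_{\cB_\cT}$ of $\cT$, and define $\cB_\cT'$ by deleting $E_{\cF_\cT}\cup E_{\cB_\cT}$. Then $\cB_\cT'$ is automatically an agreement forest; it is acyclic because it refines the acyclic forest $\cF_\cT$ (obtained from it by deleting $E_{\cB_\cT}$); it is a $\cB_\cT$-splitting because it refines $\cB_\cT$ and every common chain survives or is atomized (Lemma~\ref{lem:survives} for $n\geq 3$, trivially for $n=2$ since a $2$-chain component can only stay whole or split into two singletons); and its size is at most $|E_{\cF_\cT}|+|E_{\cB_\cT}|+1=(|\cF_\cT|-1)+|\cB_\cT|<h(\cT,\cT')+5h(\cT,\cT')$. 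This makes the acyclicity and the bookkeeping you were struggling with both fall out for free, and it avoids the detour through $\cS$, $\cS'$, legitimacy and $w(\cF_S)$ entirely. To repair your write-up you would need to prove, at minimum, that a refinement of an acyclic agreement forest is again acyclic, and redo the size count against $|\cF|-1$ rather than against $w(\cF_S)$ chain by chain.
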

\begin{proof}
Let $\cF_{\cT}$ be a maximum acyclic agreement forest for $\cT$ and $\cT'$. In this proof, we see an agreement forest as a collection of trees (see the remark below the definition in Section~\ref{sec:prelim}). Thus, $\cF_{\cT}$ can be obtained from $\cT$ (or equivalently from $\cT'$) by deleting an $(|\cF_{\cT}|-1)$-sized subset, say $E_{\cF_\cT}$, of the edges of $\cT$ and cleaning up. Similarly, $\cB_\cT$ can be obtained from $\cT$ (or equivalently from $\cT'$) by deleting a $(|\cB_{\cT}|-1)$-sized subset, say $E_{\cB_\cT}$, and cleaning up. Now consider the forest $\cB_\cT'$ obtained from $\cT$ by removing the edge set $E_{\cF_{\cT}} \cup E_{\cB_{\cT}}$ and cleaning up.

We claim that $\cB_\cT'$ is a $\cB_\cT$-splitting. To see this, first observe that $\cB_\cT'$ is an acyclic agreement forest for $\cT$ and $\cT'$ because it can be obtained by removing edge set~$E_{\cB_{\cT}}$ from~$\cF_{\cT}$ and cleaning up. Hence, to show that $\cB_\cT'$ is a $\cB_\cT$-splitting, it is left to show that it can be obtained from $\cB_\cT$ by repeated replacements of a caterpillar on $\{a_1,a_2,\ldots,a_n\}$ by isolated vertices $\{a_1\},\{a_2\},\ldots,\{a_n\}$. By its definition, $\cB_\cT'$ can be obtained from $\cB_\cT$ by removing edges and cleaning up. Thus, what is left to prove is that each chain either survives or is atomized. For $n$-chains with~$n\geq 3$, this follows from Lemma~\ref{lem:survives}, and for $n=2$ it is clear because $\cB_\cT'$ can be obtained by removing edges from $\cB_\cT$ in which each 2-chain is a component on its own.

As the size of $\cB_\cT'$ is equal to the number of edges removed to obtain it from $\cT$ plus one, we have:
\[|\cB_\cT'| \leq |E_{\cF_{\cT}}| + |E_{\cB_{\cT}}| +1 = |\cF_{\cT}| - 1 + |\cB_{\cT}| < h(\cT,\cT') + 5 h(\cT,\cT') = 6 h(\cT,\cT'),\]
{where Lemma \ref{l:B_bound} is used to bound $|\cB_{\cT}|$.} This establishes the lemma.
\end{proof}

We are now in a position to prove the main result of this section.
\begin{proof}[Proof of Theorem~\ref{t:Hybrid2DFVS}]
Throughout this proof, let $n\geq 2$. Furthermore, let $\cB_\cT$ be the chain forest for $\cT$ and $\cT'$, and let $G$ be the graph obtained from the inheritance graph $G_{\cB_\cT}$ by subsequently
\begin{enumerate}
\item weighting each vertex that corresponds to a common $n$-chain $(a_1,a_2,\ldots,a_n)$ of $\cT$ and $\cT'$  with weight $n$;
\item deleting each vertex that  corresponds to a non-chain taxon in $\cB_\cT$; and
\item for each remaining vertex $v$, creating a new vertex $\bar{v}$ with weight $1$ and two new edges $(v,\bar{v})$ and $(\bar{v},v)$.
\end{enumerate}
Furthermore, let $w$ be the weight function on the vertices of $G$. See Figure~\ref{fig:todfvs1} for an example of the construction of~$G$. We call the added vertices $\bar{v}$ the \emph{barred} vertices of $G$. Note that each common $n$-chain of $\cT$ and $\cT'$ is represented by a vertex and its barred vertex in $G$. As $\cB_\cT$ can be calculated in polynomial time, the construction of $G$ also takes polynomial time, {and the size of $G$ is clearly polynomial in the cardinality of $\cB_\cT$}.

\begin{figure}
    \centering
    \includegraphics[scale=.5]{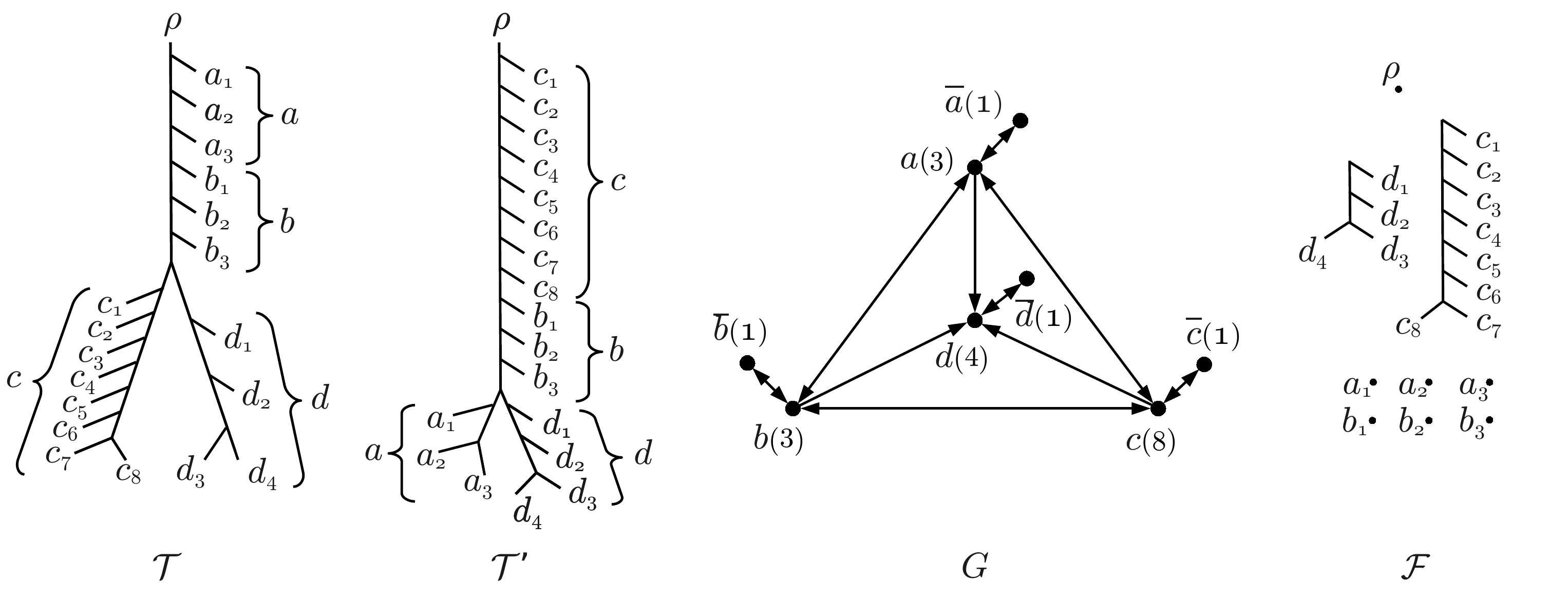}
    \caption{Two input trees~$\mathcal{T}$ and $\mathcal{T}'$, their auxiliary graph~$G$ (with weights between parentheses) and an acyclic agreement forest~$\mathcal{F}$ of~$\mathcal{T}$ and $\mathcal{T}'$. Note that~$\cF$ is a ${\cB_T\text{-splitting}}$ because it can be obtained from the chain forest $\cB_T$  by atomizing chains $a=(a_1,\ldots,a_3)$ and $b=(b_1,\ldots,b_3)$. Also note that~$\cF$ has~9 components, which is equal to the weight of a minimum feedback vertex set $\{a,b,\bar{c},\bar{d}\}$ of $G$, 8, plus a single non-chain taxon (in this case, $\rho$).
}
 \label{fig:todfvs1}
\end{figure}

Now, regarding $G$ as an instance of {\sc wDFVS}, we claim the following.

\noindent{\bf Claim.} There exists a $\cB_\cT$-splitting of size $k+s$, where $s$ is the number of non-chain elements in $\cB_\cT$, if and only if $G$ has a FVS of weight $k$.

{Suppose that $\cB_\cT'$ is a $\cB_\cT$-splitting of size~$k+s$. Hence, $k$ is equal to the number of chain elements in $\cB_\cT$ that are also elements in $\cB_\cT'$ plus the total number of leaves in common $n$-chains that are atomized in $\cB_\cT'$. Let $\bar{\cB}_\cT'$ be the forest that has been obtained from $\cB_\cT'$ by deleting all singletons, and let $G_{\bar{\cB}_\cT'}$ be its inheritance graph. Since $G_{\cB_\cT'}$ is acyclic, $G_{\bar{\cB}_\cT'}$ is also acyclic. Now, let $G'$ be the directed graph that has been obtained from $G$ in the following way. For each non-barred vertex $v$ in $G$, delete $v$ if $v$ corresponds to an $n$-chain of $\cT$ and $\cT'$ that is atomized in $\cB_\cT'$, and delete $\bar{v}$ if $v$ corresponds to an $n$-chain of $\cT$ and $\cT'$ that is not atomized in $\cB_\cT'$}. Note that for each 2-cycle $(v,\bar{v},v)$ of $G$ either $v$ or $\bar{v}$ is not a vertex of $G'$ because each $n$-chain that is common to $\cT$ and $\cT'$ is either atomized or not in $\cB_\cT'$. {This in turn implies that $G'$ is acyclic because  $G_{\bar{\cB}_\cT'}$ is isomorphic to $G'\backslash \bar{V}$, where $\bar{V}$ precisely contains all barred vertices of $G'$. Hence, an FVS of $G$, say $V$, contains each vertex of $G$ that is not a vertex of $G'$. Furthermore, by the weighting of $G$, it follows that the weight of $V$ is exactly $k$.}

Conversely, suppose that there exists an FVS of $G$, say $V$, with weight $k$. This implies that we can remove a set $V_1$ of barred vertices  and a set  $V_2=V\backslash V_1$ of non-barred vertices such that $\sum_{v_i\in V_2}w(v_i)+|V_1|=k$ and the graph  $G'=G\backslash V$ is acyclic. For each vertex $v_i\in V_2$, let $A_i=(a_{i,1},a_{i,2},\ldots,a_{i,n})$ be its associated common chain of $\cT$ and $\cT'$, and let $w(v_i)$ be the number of elements in $A_i$. Furthermore, let $V_1'$ be the subset of $V_1$ that contains precisely each vertex $\bar{v}$ of $V_1$ for which $v\notin V_2$. If $|V_1'|<|V_1|$, then it is easily checked that that $V_1'\cup V_2$ is an FVS of $G$ whose weight is strictly less than $k$. Therefore, we may assume for the remainder of this proof that $|V_1'|=|V_1|$. Now, let $\cB_\cT'$ be the forest that has been obtained from $\cB_\cT$ in the following way. For each vertex $v_i$ in $V_2$, replace $A_i$ in $\cB_\cT$ with the elements $\{a_{i,1}\},\{a_{i,2}\},\ldots,\{a_{i,n}\}$. Thus, $A_i$ is atomized in  $\cB_\cT'$. We next construct the inheritance graph $G_{\cB_{\cT}'}$ from $G_{\cB_{\cT}}$. For each vertex $v$ of $G_{\cB_{\cT}}$ that corresponds to a common $n$-chain $(a_1,a_2,\ldots,a_n)$ of $\cT$ and $\cT'$ that is atomized in $\cB_\cT'$, replace $v$ with the vertices $a_1,a_2,\ldots,a_n$, delete each edge $(v,w)$ of $G_{\cB_{\cT}}$, and replace each edge $(u,v)$ of $G_{\cB_{\cT}}$ with the edges $(u,a_1),(u,a_2),\ldots,(u,a_n)$. By Lemma~\ref{l:bTprops}, the vertices $a_1,a_2,\ldots,a_n$ have outdegree 0 in $G_{\cB_{\cT}'}$. Noting that there is a natural bijection between the cycles in $G_{\cB_{\cT}}$ and the cycles in $G$ that do not pass through any barred vertex, it follows that, as $G'$ is acyclic, $G_{\cB_{\cT}'}$ is also acyclic. Hence, $\cB_\cT'$ is a $\cB_\cT$-splitting for $\cT$ and $\cT'$. The claim now follows from
$$|\cB_\cT'|=s+\sum_{v_i\in V_2}w(v_i)+|V_1|=s+k.$$

It remains to show that the reduction is approximation preserving. Suppose that there exists a polynomial-time $c$-approximation for {\sc wDFVS}. Let $k$ be the weight of a solution returned by this algorithm, and let $k^*$ be the weight of an optimal solution. By the above claim, we can then construct a solution to {\sc MAAF} of size $k+s$, from which we can obtain a solution to {\sc MinimumHybridization} with value $k+s-1$ by Theorem~\ref{t:hybrid2}. We have,
$$k + s - 1 < ck^*+s\leq ck^*+cs=c(k^*+s)=c\cdot\opt$$ 
and, thus, a constant factor $c$-approximation for finding an optimal $\cB_T$-splitting. Now, by Lemma~\ref{l:bSplitting}, 
$$k + s - 1\leq c\cdot\opt\leq 6c\cdot h(\cT,\cT'),$$ 
thereby establishing that, if there exists a polynomial-time $c$-approximation for {\sc wDFVS}, then there exists a polynomial-time $6c$-approximation for {\sc MinimumHybridization}. This concludes the proof of the theorem.
\end{proof}

It is not too difficult to extend Theorem \ref{t:Hybrid2DFVS} to {\sc DFVS} i.e. the unweighted variant of directed feedback vertex set.

\begin{theorem}
\label{t:Hybrid2DFVSunweight} If, for some~$c\geq 1$, there exists a polynomial-time $c$-approximation for {\sc DFVS}, then there exists a polynomial-time $6c$-approximation for {\sc MinimumHybridization}.
\end{theorem}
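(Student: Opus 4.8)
The plan is to reduce Theorem~\ref{t:Hybrid2DFVSunweight} to the already-established weighted result of Theorem~\ref{t:Hybrid2DFVS} by showing that, for the specific instances of {\sc wDFVS} that arise in the proof of Theorem~\ref{t:Hybrid2DFVS}, a $c$-approximation for unweighted {\sc DFVS} can be leveraged to obtain a $c$-approximation (or something asymptotically equivalent) for the weighted instance. The key observation is that the weighted graph $G$ constructed in the proof of Theorem~\ref{t:Hybrid2DFVS} has a very restricted weight structure: every barred vertex has weight exactly $1$, and every non-barred vertex has integer weight equal to the length $n$ of its associated common chain, where $n \geq 2$. Integer weights suggest the standard trick of simulating weights by vertex multiplicity.

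First I would describe the vertex-blowup construction: from the weighted graph $G$, build an unweighted directed graph $G^{u}$ by replacing each weighted vertex $v$ with weight $w(v)$ by a set of $w(v)$ unweighted copies $v^{(1)}, \ldots, v^{(w(v))}$, arranged so that they behave, for the purposes of cycle-hitting, like a single weighted vertex. The cleanest way to do this is to place the copies in a directed cycle (or a bidirectionally connected clique) among themselves, and to route every arc $(u,v)$ of $G$ into one designated copy of $v$ and every arc $(v,u)$ out of one designated copy. Then any directed cycle of $G$ that passes through $v$ corresponds to a directed cycle of $G^{u}$ passing through the gadget of $v$, and to destroy all cycles through $v$ one must delete at least one copy; conversely deleting the cheapest way to hit the internal cycle forces deletion of copies whose count matches the weight. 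One must check that a minimum FVS of $G^{u}$ never benefits from deleting more than $w(v)$ copies of a single gadget (it never does, since deleting all $w(v)$ copies already removes $v$ entirely, and deleting fewer than the full set can be normalized). This yields a weight-preserving correspondence: the minimum FVS of $G^{u}$ has size exactly equal to the minimum weight of an FVS of $G$, and any FVS of $G^{u}$ of size $k$ can be converted in polynomial time into an FVS of $G$ of weight at most $k$.

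Next I would feed $G^{u}$, which has size polynomial in the size of $G$ and hence polynomial in the input since the chain lengths $n$ are bounded by the number of taxa, into the hypothesized $c$-approximation for {\sc DFVS}. This returns an FVS of $G^{u}$ of size at most $c$ times the optimum of $G^{u}$, which equals $c$ times the optimum weight of an FVS of $G$ by the correspondence above. Converting back gives an FVS of $G$ of weight at most $c \cdot k^{*}$, i.e.\ exactly the guarantee that the proof of Theorem~\ref{t:Hybrid2DFVS} assumed from a {\sc wDFVS} approximation. From that point the remainder of the argument in the proof of Theorem~\ref{t:Hybrid2DFVS} applies verbatim, producing a $6c$-approximation for {\sc MinimumHybridization} via the Claim, Lemma~\ref{l:bSplitting}, and Theorem~\ref{t:hybrid2}.

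The main thing to be careful about—though I do not expect it to be a genuine obstacle—is that the blowup must preserve polynomiality and must not introduce spurious cycles or destroy the bijection between cycles of $G$ and cycles of the relevant subgraph that the proof of Theorem~\ref{t:Hybrid2DFVS} relies upon. Since all weights are polynomially bounded (each chain has at most $|X|$ leaves and each barred vertex has weight $1$), the number of copies is polynomial, so $G^{u}$ has polynomial size and the reduction runs in polynomial time. The gadget must be chosen so that its only internal cycles are those that a correct FVS is forced to hit by deleting copies, and so that external cycles through a gadget correspond exactly to cycles through the original vertex; a strongly connected gadget (for instance a directed cycle on the copies with all external in-arcs and out-arcs attached consistently) achieves this. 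Once this bookkeeping is verified, the rest follows immediately from Theorem~\ref{t:Hybrid2DFVS}.
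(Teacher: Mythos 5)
Your overall strategy is exactly the paper's: exploit the fact that the weights in the {\sc wDFVS} instance $G$ of Theorem~\ref{t:Hybrid2DFVS} are positive integers bounded by $|X|$, simulate them by vertex multiplication to get a polynomial-size unweighted instance, and then run the rest of that proof unchanged. However, the concrete gadget you propose does not simulate the weights, and this is a genuine gap rather than bookkeeping. If the $w(v)$ copies of $v$ are arranged in a directed cycle with all external in-arcs entering one designated copy and all external out-arcs leaving one designated copy, then deleting a \emph{single} copy on the internal entry-to-exit path destroys the internal cycle \emph{and} every external cycle through the gadget; the gadget therefore behaves like a vertex of weight $1$, not weight $w(v)$, and the claimed identity ``minimum FVS of $G^{u}$ equals minimum weight of an FVS of $G$'' fails. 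The bidirectional-clique variant fails in the opposite direction: a bidirected clique on $w(v)$ vertices forces at least $w(v)-1$ deletions in every gadget of every feasible solution, so even when $G$ is acyclic (optimal weight $0$) the unweighted optimum is $\sum_v (w(v)-1)>0$, again destroying the correspondence. No strongly connected gadget with a single entry/exit can work, because hitting one internal directed cycle never costs more than one deletion.

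The construction the paper actually uses is the opposite of a connected gadget: replace $v$ by $w(v)$ \emph{independent} copies $v_1,\dots,v_{w(v)}$ with no arcs among them, and replace each arc $(u,v)$ of $G$ by the complete bipartite set of arcs $\{(u_i,v_j): 1\le i\le w(u),\ 1\le j\le w(v)\}$. Then a cycle of $G$ survives in $G^{u}$ as long as at least one copy of each of its vertices survives, so eliminating $v$ genuinely costs all $w(v)$ copies; the map ``put $v$ in $F$ iff all of $v_1,\dots,v_{w(v)}$ lie in $F'$'' converts a size-$k$ FVS of $G^{u}$ into a weight-at-most-$k$ FVS of $G$, and conversely a weight-$k$ FVS of $G$ lifts to a size-$k$ FVS of $G^{u}$. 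With that gadget substituted for yours, the remainder of your argument (polynomial size via $w(v)\le |X|$, then invoking the Claim, Lemma~\ref{l:bSplitting} and Theorem~\ref{t:hybrid2} as in Theorem~\ref{t:Hybrid2DFVS}) goes through as you describe.
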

\begin{proof}
In the proof of Theorem \ref{t:Hybrid2DFVS} we create an instance $G$ of {\sc wDFVS}. Let
$w$ be the weight function on the vertices of $G$. Note that the function is non-negative
and integral and for every vertex $v \in G$, $w(v) \leq |X|$ i.e. the weight function is polynomially bounded
in the input size. We create an instance $G'$ of DFVS as follows. For each vertex $v$ in $G$ we create $w(v)$ vertices in $G'$
$v_1, \ldots, v_{w(v)}$. For each edge $(u,v)$ in $G$ we introduce edges
$\{ (u_i, v_j) | 1 \leq i \leq w(u), 1 \leq j \leq w(v) \}$ in $G'$. Solutions to {\sc wDFVS($G$)}
and {\sc DFVS($G'$)} are very closely related, which allows us to use $G'$ and DFVS instead of
$G$ and wDFVS in the proof of Theorem \ref{t:Hybrid2DFVS}.\footnote{Formally, what
we demonstrate is an L-reduction from {\sc wDFVS} to {\sc DFVS} with coefficients
$\alpha = \beta = 1$ which works for instances with polynomially-bounded weights.} Specifically, consider any
feedback vertex set $F'$ of $G'$ of size $k$. We create a feedback vertex set $F$ of
$G$ as follows. For each vertex $v \in G$, we include $v$ in $F$ if and only if \emph{all}
the vertices $v_1, \ldots, v_{w(v)}$ are in $F'$. Note that the weight of $F$ is
less than or equal to $k$. To see that $F$ is a feedback vertex set, suppose
some cycle $C = u,v,w,\ldots,u$ survives in $G$. But then, for each vertex $u \in C$,
some vertex $u_i$ survives in $G'$, which means a cycle also survived in $G'$, contradicting
the assumption that $F'$ is a feedback vertex set. In the other direction, observe that any weight $k$ feedback vertex set $F$ of $G$ can be transformed into an feedback vertex set $F'$ of
$G'$ with size $k$ as follows: for each $v \in F$, place all $v_{1}, \ldots, v_{w(v)}$ in $F'$.
\end{proof}

Moreover, the reduction in the proof of Theorem~\ref{t:Hybrid2DFVS} can be used not only for constant~$c$, which we use in the next corollary.

\begin{corollary}
\label{cor:approxalg}
There exists a polynomial-time $\text{O}( \log r \log \log r)$-approximation for {\sc MinimumHybridization}, where
$r = h(\cT, \cT')$
\end{corollary}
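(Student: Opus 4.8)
The plan is to combine the approximation-preserving reduction from {\sc MinimumHybridization} to {\sc wDFVS} (established in Theorem~\ref{t:Hybrid2DFVS} and its proof) with the known $\text{O}(\log \tau^{*}\log\log\tau^{*})$-approximation for {\sc wDFVS}, where $\tau^{*}$ denotes the optimal fractional solution of the weighted feedback vertex set instance. The key observation is that the reduction in the proof of Theorem~\ref{t:Hybrid2DFVS} does not require $c$ to be constant: the same chain of inequalities goes through when $c$ is replaced by a slowly-growing function of the instance parameters, provided we control how that function relates to $r=h(\cT,\cT')$.

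First I would run the construction from the proof of Theorem~\ref{t:Hybrid2DFVS} to build the weighted instance $G$ of {\sc wDFVS} from $\cT$ and $\cT'$ in polynomial time. Let $k^{*}$ be the weight of an optimal feedback vertex set of $G$ and recall from the Claim in that proof that $k^{*}+s=\opt$, where $s$ is the number of non-chain elements of $\cB_\cT$. Next I would apply the $\text{O}(\log \tau^{*}\log\log\tau^{*})$-approximation of \cite{dfvsApprox,dfvsSeymour} to $G$, obtaining a feedback vertex set of weight $k\leq c\cdot k^{*}$ with approximation factor $c=\text{O}(\log\tau^{*}\log\log\tau^{*})$. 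Since $\tau^{*}\leq k^{*}\leq\opt<6h(\cT,\cT')=6r$ by Lemma~\ref{l:bSplitting} (and the fractional optimum is at most the integral optimum), we have $\log\tau^{*}=\text{O}(\log r)$ and $\log\log\tau^{*}=\text{O}(\log\log r)$, so that $c=\text{O}(\log r\log\log r)$. Translating the feedback vertex set back through the Claim gives a $\cB_\cT$-splitting, hence an acyclic agreement forest, hence by Theorem~\ref{t:hybrid2} a hybridization network displaying $\cT$ and $\cT'$, all in polynomial time.

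It remains to verify the approximation guarantee, which mirrors the final inequalities in the proof of Theorem~\ref{t:Hybrid2DFVS}. Reusing the estimate there, the value of the constructed solution satisfies
\[
k+s-1<c\,k^{*}+s\leq c(k^{*}+s)=c\cdot\opt\leq 6c\cdot h(\cT,\cT'),
\]
and since the optimal hybridization number is exactly $r=h(\cT,\cT')$, dividing through shows the returned solution has value at most $\text{O}(c)\cdot r=\text{O}(\log r\log\log r)\cdot r$, i.e. an $\text{O}(\log r\log\log r)$-approximation. The main obstacle is the circularity in expressing the approximation factor in terms of $r$ itself: the algorithm does not know $r$ in advance, yet the factor $c$ depends on it. I expect this to be resolved cleanly by the fact that the {\sc wDFVS} approximation of \cite{dfvsApprox,dfvsSeymour} is stated in terms of the quantity $\tau^{*}$ intrinsic to the constructed instance $G$ rather than in terms of $r$; one then only needs the a~priori bound $\tau^{*}<6r$ from Lemma~\ref{l:bSplitting} to convert the guarantee, which is already available to us, into a bound phrased in $r$. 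No guessing or doubling of $r$ is needed.
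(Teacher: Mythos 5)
Your proposal is correct and follows essentially the same route as the paper: build the weighted instance $G$ from Theorem~\ref{t:Hybrid2DFVS}, apply the $\text{O}(\log \tau^{*}\log\log\tau^{*})$-approximation of \cite{dfvsApprox,dfvsSeymour}, bound $\tau^{*}$ by $\opt<6r$ via Lemma~\ref{l:bSplitting}, and translate back with the factor-$6$ analysis. The only cosmetic difference is that the paper additionally bounds the number of vertices of $G$ by $\text{O}(r)$ using Lemma~\ref{l:B_bound}, whereas you rely solely on the $\tau^{*}$ bound, which already suffices since the cited guarantee is a minimum of the two terms.
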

\begin{proof}
In \cite{dfvsApprox}, which extended \cite{dfvsSeymour}, a polynomial-time approximation algorithm for wDFVS is presented whose approximation ratio is $\text{O}( \min( \log |V| \log \log |V|, \log \tau^{*} \log \log \tau^{*}) )$, where $|V|$ is the number of vertices in the wDFVS instance and $\tau^{*}$ is the optimal \emph{fractional} solution value of the problem. We show that in the wDFVS instance $G$ that we create in the proof of Theorem~\ref{t:Hybrid2DFVS},
both the number of vertices in $G$ and the weight of the optimal fractional solution value of {\sc wDFVS}$(G)$ are $\text{O}(r)$. To see that $G$ has at most $\text{O}(r)$ vertices, observe that $G$ contains two vertices for every chain element in
the chain forest $\cB_\cT$, and that (by Lemma \ref{l:B_bound}) $|\cB_\cT| < 5r$.
Secondly, recall from Lemma \ref{l:bSplitting} that $\opt < 6r$. By construction, $\opt$ is
an upper bound on the optimum solution value of {\sc wDFVS}$(G)$, hence on $\tau^{*}$. 
Thus, given $G$ as input, the algorithm in \cite{dfvsApprox} constructs
a feedback vertex set that is at most a factor $\text{O}( \log r \log \log r)$ larger than the true
optimal solution of {\sc wDFVS}$(G)$. As shown in the proof of Theorem
\ref{t:Hybrid2DFVS} this can be used to obtain an approximation ratio at most 6 times
larger for {\sc MAAF}, which is clearly also $\text{O}( \log r \log \log r)$.
\end{proof}

\noindent
Finally, note that for a given instance the actual approximation ratio obtained by Corollary \ref{cor:approxalg} will sometimes be determined by $|V|$, and sometimes by $\tau^{*}$,
and can potentially be significantly smaller than $\text{O}( \log r \log \log r)$. For example,
if there are very few chains in the chain forest, but they are all extremely long, then it can
happen that $|V| << \tau^{*}$. Conversely, if the chain forest contains many short chains, and
only a small number of them need to be atomized to attain acyclicity, then it can happen
that $\tau^{*} << |V|$.

\section{An approximation-preserving reduction from {\sc DFVS} to {\sc MinimumHybridization}}\label{sec:2minhybrid}

In this section we prove the following theorem.

\begin{theorem}
\label{theorem:dfvsToHybrid}
If, for some~$c\geq 1$, there exists a polynomial-time $c$-approximation algorithm for {\sc MinimumHybridization}, then there exists a  polynomial-time $(c+\epsilon)$-approximation algorithm for {\sc DFVS} for all~$\epsilon>0$.
\end{theorem}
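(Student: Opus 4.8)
The plan is to give the reverse of the gadget reduction in Section~\ref{sec:2dfvs}: from a directed graph $D$ I build two trees $\cT$ and $\cT'$ whose acyclic agreement forests encode feedback vertex sets of $D$, and I make the ``per-vertex'' cost of the encoding large enough that the additive overhead of the reduction becomes negligible. This amplification is exactly what turns a multiplicative factor $c$ into $c+\epsilon$.

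First I would fix a large integer $\ell$ (chosen at the end, polynomially in the size of $D$, in $c$, and in $1/\epsilon$) and, for each vertex $v$ of $D$, introduce a common chain $C_v$ of length $\ell$ in both $\cT$ and $\cT'$. The trees are assembled so that a distinguished chain forest $\cB$ (one component per vertex-chain, the root component $\{\rho\}$, and whatever edge gadgetry is required) has an inheritance graph $G_{\cB}$ whose directed cycles are in bijection with the directed cycles of $D$, the design goal being that atomizing $C_v$ corresponds exactly to deleting $v$ from $D$. This is sound because, by Lemma~\ref{l:bTprops}, the singletons produced by atomizing a chain have out-degree $0$ in the inheritance graph and hence lie on no directed cycle; thus the set of atomized chains in any acyclic agreement forest is precisely a feedback vertex set of $D$.

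Next I would set up the two-way correspondence. In the forward direction, from a feedback vertex set $F$ of $D$ one atomizes exactly the chains $\{C_v : v\in F\}$, yielding an acyclic agreement forest of size $N_0 + |F|(\ell-1)$, where $N_0$ is a fixed number of components determined by $D$ and independent of $F$. In the reverse direction, given any acyclic agreement forest, Lemma~\ref{lem:survives} lets me assume in polynomial time that every chain (taking $\ell\ge 3$) either survives or is atomized; the atomized chains then form a feedback vertex set $F_{\cF}$, and a component count gives $|\cF| \ge N_0 + |F_{\cF}|(\ell-1)$. Combining the two directions yields $h(\cT,\cT') = N_0 - 1 + \tau(\ell-1)$, where $\tau$ is the minimum feedback vertex set size of $D$ (and if $D$ is acyclic I simply detect this and return the empty set, so I may assume $\tau\ge 1$).

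Finally I would run the amplification. A $c$-approximation for {\sc MinimumHybridization} returns a forest with $|\cF|-1 \le c\,h(\cT,\cT')$, from which I extract $F_{\cF}$ as above, so that
\[ |F_{\cF}|(\ell-1) \le (|\cF|-1) - (N_0-1) \le c\big(N_0-1+\tau(\ell-1)\big) - (N_0-1) = c\,\tau(\ell-1) + (c-1)(N_0-1), \]
giving $|F_{\cF}| \le c\,\tau + (c-1)(N_0-1)/(\ell-1)$. Choosing $\ell-1 \ge (c-1)(N_0-1)/\epsilon$ makes the additive term at most $\epsilon \le \epsilon\tau$, hence $|F_{\cF}|\le (c+\epsilon)\tau$; since $N_0$ and $\ell$ are polynomial in the input, the whole reduction runs in polynomial time. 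The main obstacle is the gadget construction itself: realizing an \emph{arbitrary} directed graph $D$ faithfully as the inheritance graph of a chain forest is delicate, because within a single tree the ancestry relation is a transitively closed partial order, so the edges of $D$ cannot in general be placed without forcing spurious comparabilities. The crux is therefore to design edge gadgets that reproduce precisely the cycle structure of $D$ while guaranteeing that no minimum acyclic agreement forest ever benefits from atomizing a gadget component instead of a vertex-chain, so that optimal agreement forests correspond exactly to minimum feedback vertex sets of $D$.
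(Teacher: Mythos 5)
Your overall strategy is the same as the paper's: encode each vertex of $D$ as a common chain of length $\ell$, make atomization of a vertex-chain simulate deletion of the vertex, and choose $\ell$ large enough that the additive overhead of the encoding is absorbed into an $\epsilon\,\tau$ term (with brute force or detection when $\tau$ is small). The amplification arithmetic at the end is correct. But the proposal has a genuine gap, and you name it yourself: you never construct the ``edge gadgetry,'' and without it the statement that the directed cycles of $G_{\cB}$ are in bijection with those of $D$ is an unproved design goal rather than a fact. Your worry about spurious comparabilities is exactly the real obstruction --- ancestry in a tree is a transitively closed partial order, so one cannot simply place one chain above another for each arc of an arbitrary digraph --- and resolving it is the bulk of the actual proof, not a detail that can be deferred.

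The paper resolves it in two steps that your sketch is missing. First, $D$ is replaced by an auxiliary digraph $D'$ in which every vertex $v$ is split into local gadget vertices $v_{\text{in}}^{u_1},\ldots,v_{\text{in}}^{u_{d^-(v)}}$ feeding $v^-$, an arc $(v^-,v^+)$, and $v^+$ feeding $v_{\text{out}}^{w_1},\ldots,v_{\text{out}}^{w_{d^+(v)}}$, with each arc $(u,v)$ of $D$ realized as the single arc $(u_{\text{out}}^{v},v_{\text{in}}^{u})$; this localizes all in/out structure and preserves minimum FVS size. Second, the trees are built from three kinds of chains: short x-type chains (length $\ell$) for $v^-$ and $v^+$, and long y-type and z-type chains (length $L\gg\ell$) for the in/out vertices and for separators. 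In $\cT$ each related pair of chains hangs below its own z-type separator on a common backbone $\cT_0$; in $\cT'$ the same chains are regrouped into per-vertex concatenations. The separators force any two non-z-type chains to be either below each other in one tree and side by side in the other, or separated by a z-chain in exactly one tree, so no two of them can ever share a component of an agreement forest; this rigidity is what makes your $N_0$ genuinely constant and the component count exact, and it also shows that the resulting inheritance graph is $D'$ plus only harmless extra edges (whose cycles already pass through $v^-$). Finally, the explicit choice of $L$ guarantees that no y- or z-type chain is ever atomized by a $c$-approximate solution --- a point you also gloss over, since your claim that the atomized chains form a feedback vertex set of $D$ is only meaningful once you know that only vertex-chains can be atomized. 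None of these steps is routine, so as written the proposal is an outline of the correct strategy rather than a proof.
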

\begin{proof}
We show an approximation preserving reduction from {\sc DFVS} to {\sc MAAF}. The theorem then follows because of the equivalence of MAAF and {\sc MinimumHybridization} described in Theorem~\ref{t:hybrid2}.

Let~$D=(V,A)$ be an instance of {\sc DFVS}. First we transform~$D$ into an auxiliary graph~$D'$. For a vertex~$v$ of~$D$, we denote the parents of~$v$ as $u_1,u_2,\ldots,u_{d^-(v)}$ and the children of~$v$ as $w_1,w_2,\ldots,w_{d^+(v)}$ (To facilitate the exposition, we assume a total order on the parents of each vertex and on the children of each vertex.). We construct the graph~$D'$ as follows. For every vertex~$v\in V$,~$D'$ has vertices $v_{\text{in}}^{u_1},v_{\text{in}}^{u_2},\ldots,v_{\text{in}}^{u_{d^-(v)}}$, vertices~$v^-$ and~$v^+$ as well as vertices $v_{\text{out}}^{w_1},v_{\text{out}}^{w_2},\ldots,v_{\text{out}}^{w_{d^+(v)}}$. The edges of~$D'$ are as follows. For each vertex~$v\in V$,~$D'$ has edges from each of $v_{\text{in}}^{u_1},v_{\text{in}}^{u_2},\ldots,v_{\text{in}}^{u_{d^-(v)}}$ to~$v^-$, an edge from~$v^-$ to~$v^+$ and edges from~$v^+$ to each of $v_{\text{out}}^{w_1},v_{\text{out}}^{w_2},\ldots,v_{\text{out}}^{w_{d^+(v)}}$. In addition, for each edge~$(u,v)$ of~$D$, there is an edge~$(u_\text{out}^{v},v_\text{in}^u)$ in~$D'$. This concludes the construction of~$D'$. An example is given in Figure~\ref{fig:reduction1}.

\begin{figure}
    \centering
    \includegraphics[scale=.5]{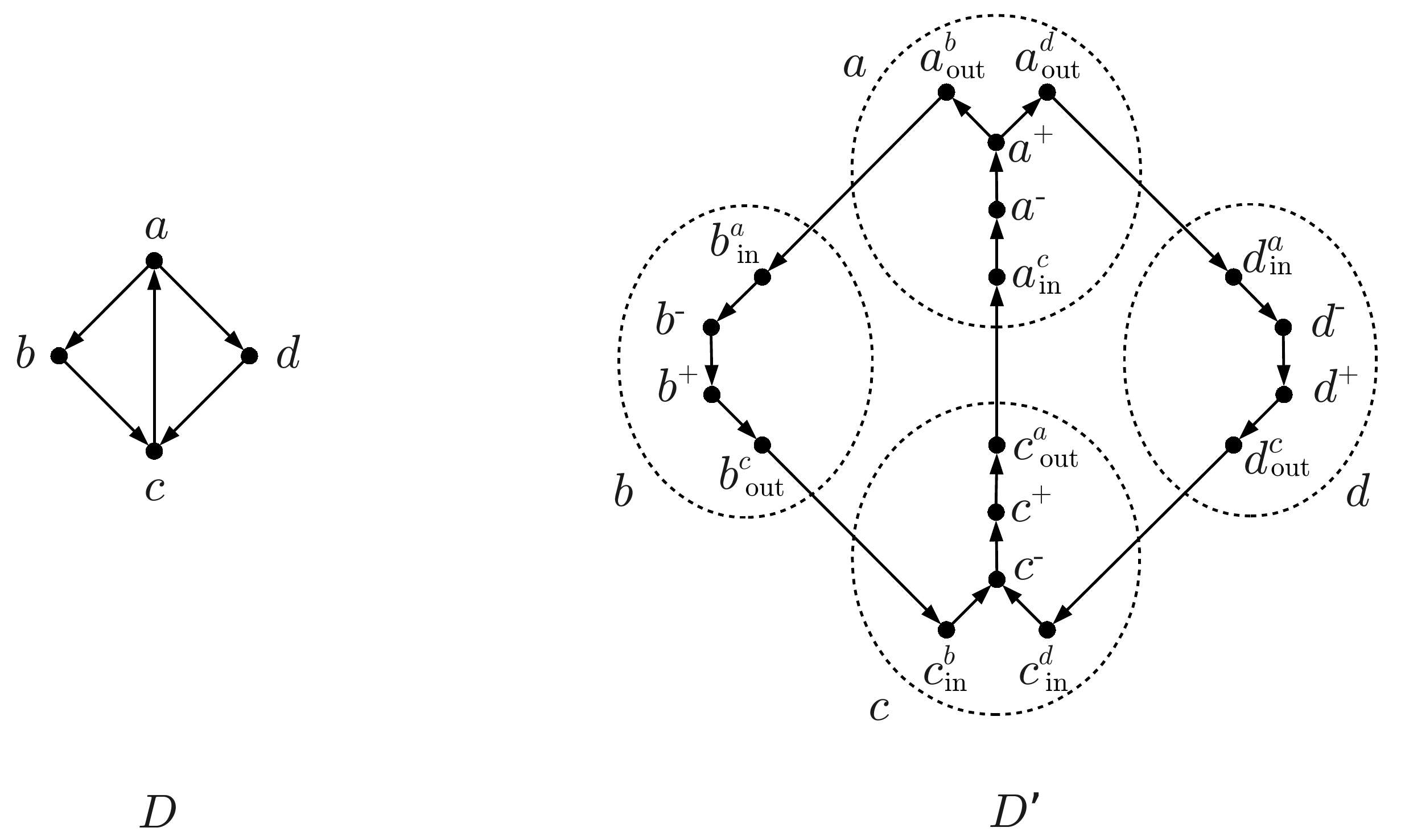}
    \caption{An instance~$D$ of DFVS and the modified graph~$D'$.}
    \label{fig:reduction1}
\end{figure}

We now first show that~$D$ has a FVS of size at most~$f$ if and only if~$D'$ has a FVS of size at most~$f$. Observe that each directed cycle of~$D$ corresponds to a directed cycle of~$D'$ and vice versa. Thus, from a FVS~$F$ of~$D$, we can construct a FVS~$F'$ of~$D'$ by, for each~$v\in F$, adding~$v^-$ to~$F'$. Reversely, from a FVS~$U'$ of~$D'$, we can create a FVS~$U$ of~$D$ as follows: a vertex~$v$ of~$D$ is put in~$U$ if and only if at least one of the corresponding vertices $v_{\text{in}}^{u_1},v_{\text{in}}^{u_2},\ldots,v_{\text{in}}^{u_{d^-(v)}}$,~$v^-$,~$v^+$,$v_{\text{out}}^{w_1},v_{\text{out}}^{w_2},\ldots,v_{\text{out}}^{w_{d^+(v)}}$ is in~$U'$.

Intuitively, the idea of our reduction is as follows. We will construct two rooted binary trees~$\cT$ and~$\cT'$ consisting of long chains. We build them in such a way that the graph~$D'$ is basically the inheritance graph of the chain forest for $\cT$ and $\cT'$. This graph can be made acyclic by atomizing some of the chains. Thus, solving {\sc DFVS} on~$D'$ is basically equivalent to deciding which chains to atomize. We make all the chains that can be atomized of the same length. Hence, since each chain that is atomized adds the same number of components to the agreement forest, solving {\sc DFVS} on~$D'$ is essentially equivalent to finding a maximum acyclic agreement forest for $\cT$ and $\cT'$.

Before we proceed, we need some more definitions. Recall that an {\it $n$-chain} of a tree is an $n$-tuple $(a_1,a_2,\ldots,a_n)$ of leaves such that the parent of $a_1$ is either the same as the parent of $a_2$ or the parent of $a_1$ is a child of the parent of $a_2$ and, for each $i\in\{2,3,\ldots,n-1\}$, the parent of $a_i$ is a child of the parent of $a_{i+1}$. A tree~$T$ whose leaf set~$\cL(T)$ is a chain of~$T$ is called a \emph{caterpillar} on~$\cL(T)$. It is easy to see that, for every chain~$C$, there exists a unique caterpillar on~$C$. By \emph{hanging} a chain~$C$ below a leaf~$x$, we mean the following: subdivide the edge entering~$x$ by a new vertex~$v$ and add an edge from~$v$ to the root of the caterpillar on~$C$. When we hang a chain~$C_1$ below a chain~$C_2$, we hang the caterpillar on~$C_1$ below the lowest leaf (or a lowest leaf)~$x_1$ of~$C_2$. By \emph{replacing} a leaf~$x$ by a chain~$C$ we mean: delete~$x$ and add an edge from its former parent to the root of the caterpillar on~$C$.

We are now ready to construct an instance of {\sc MAAF}. The trees,~$\cT$ and~$\cT'$, will be built of chains of three types: x-type, y-type and z-type. The x-type chains have length~$\ell$ while the y-type and z-type chains have length~$L$ (with $L>>\ell$). Each of these chains will be common to both trees. Recall that,
by Lemma~\ref{lem:survives}, we may assume that every chain either survives or is atomized. The idea is that y-type chains and z-type chains are so long that they will all survive. The x-type chains are shorter and might be atomized. In fact, the x-type chains that are atomized will correspond to a FVS of~$D'$.

\begin{figure}
    \centering
    \includegraphics[scale=.5]{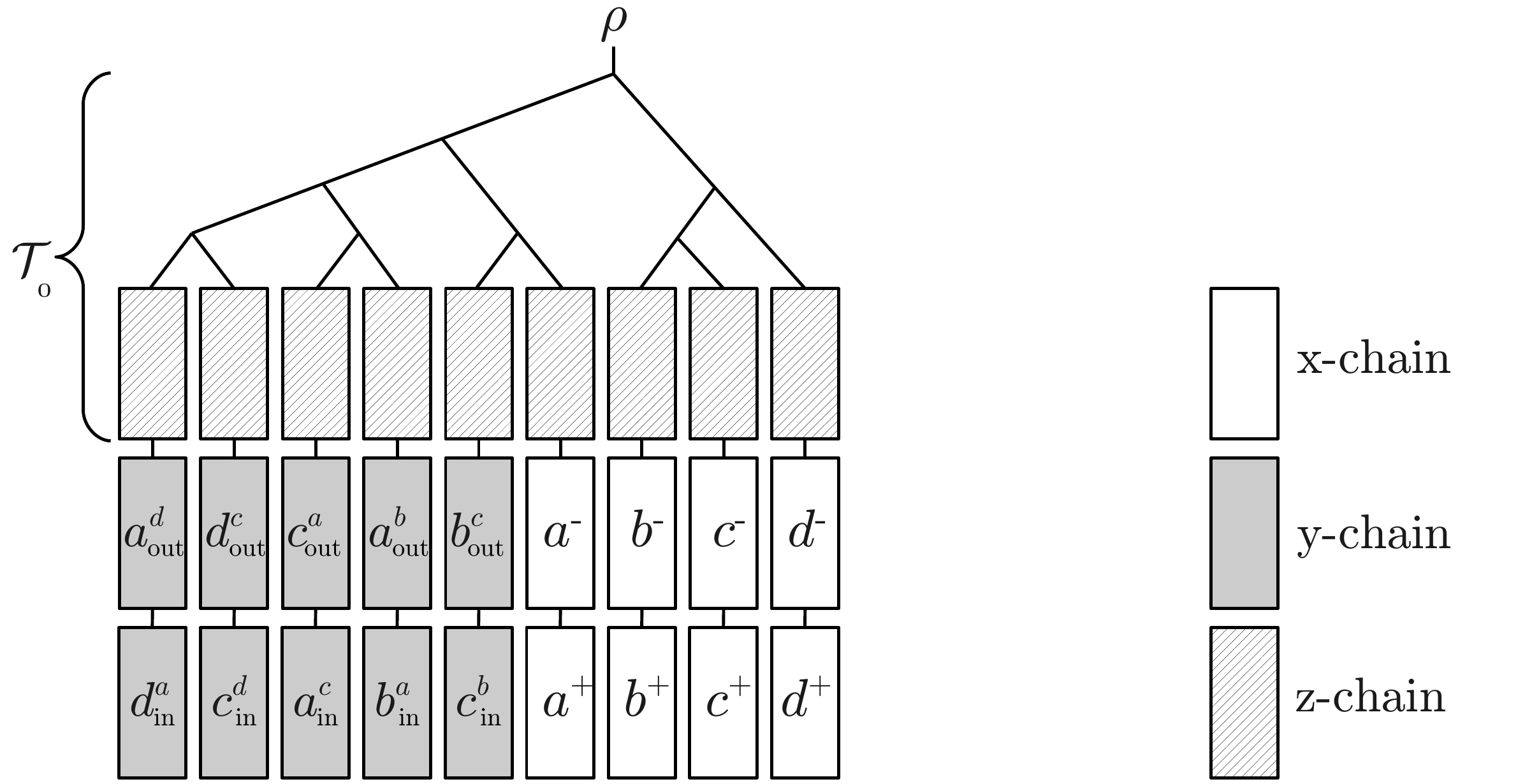}
    \caption{$\cT$: the first tree of the constructed MAAF instance.}
    \label{fig:reduction2}
\end{figure}

\begin{figure}
    \centering
    \includegraphics[scale=.5]{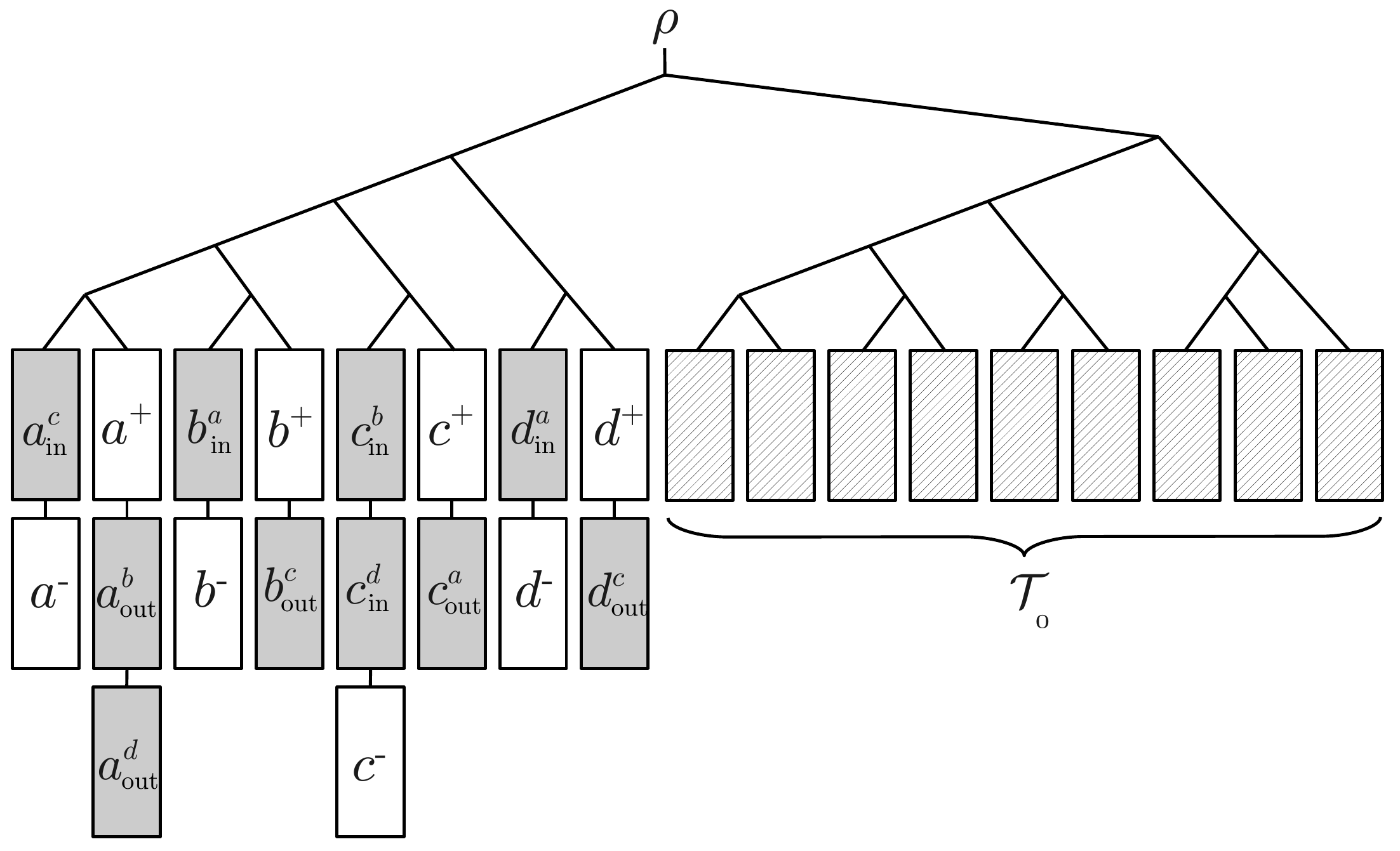}
    \caption{$\cT'$: the second tree of the constructed MAAF instance.}
    \label{fig:reduction3}
\end{figure}

We build the trees~$\cT$ and~$\cT'$ as follows. For each vertex of~$D'$ of the type~$v^-$ or~$v^+$ we create an x-type chain. For each other vertex of~$D'$ we create a y-type chain. Finally, for each vertex and edge of the original graph~$D$ we create a z-type chain. All leaves of all chains have different labels. Now we combine the chains into two trees as follows.

First~$\cT$. Start with an arbitrary rooted binary tree on~$|V|+|A|$ leaves and replace each leaf by a z-type chain. We call the current tree~$\mathcal{T}_0$. For each edge $(u,v)$ of~$D$, the tree contains a z-type chain. Hang below this z-type chain the y-type chain for~$u_\text{out}^v$ and below that the y-type chain for~$v_\text{in}^u$. Furthermore, for each vertex~$v$ of~$D$, the tree also has a z-type chain. Hang below this z-type chain the x-type chain for~$v^-$ and below that the x-type chain for~$v^+$.

Now~$\cT'$. Start with an arbitrary rooted binary tree on~$2|V|$ leaves. So we have two leaves for each vertex~$v$ of~$D$. Replace one of them by a concatenation of (from top to bottom) the y-type chains for $v_{\text{in}}^{u_1},v_{\text{in}}^{u_2},\ldots,v_{\text{in}}^{u_{d^-(v)}}$ and the x-type chain for~$v^-$. Replace the other leaf for~$v$ by a concatenation of (from top to bottom) the x-type chain for~$v^+$ and the y-type chains for $v_{\text{out}}^{w_1},v_{\text{out}}^{w_2},\ldots,v_{\text{out}}^{w_{d^+(v)}}$. Finally, hang a copy of~$\mathcal{T}_0$ below the root. This concludes the construction of the MAAF instance. For an example, see Figures~\ref{fig:reduction2} and~\ref{fig:reduction3}.

We claim that~$D'$ (and thus~$D$) has a FVS of size at most~$f$ if and only if there exists an acyclic agreement forest of~$\cT$ and~$\cT'$ of size at most $1+2(|A|+|V|)+(\ell-1)f$.

To show this, consider the agreement forest~$A_D$ for~$\cT$ and~$\cT'$ in which~$\mathcal{T}_0$ is one component, each x-type chain is one component, and each y-type chain is one component. The inheritance graph $G_{A_D}$ of this agreement forest can be obtained by making some (minor) changes to~$D'$. Add a vertex labelled~$\mathcal{T}_0$ with edges to all other vertices. Secondly, for each~$v\in V$, add an edge $(v_{\text{in}}^{u_i},v_{\text{in}}^{u_j})$ for each pair~$i,j$ with $1\leq i<j\leq d^-(v)$ and an edge $(v_{\text{out}}^{w_i},v_{\text{out}}^{w_j})$ for each pair~$i,j$ with $1\leq i<j\leq d^+(v)$. Observe that, given a FVS of~$D'$, there exists a FVS of~$D'$ of at most the same size that consists of only vertices of the type~$v^-$. Such a FVS is also a FVS of $G_{A_D}$ since any directed cycle passing through any of the newly added edges $(v_{\text{in}}^{u_i},v_{\text{in}}^{u_j})$ or $(v_{\text{out}}^{w_i},v_{\text{out}}^{w_j})$ also passes through $v^-$. Thus, if we consider (without loss of generality) only FVSs consisting of $v^-$-type vertices, then any FVS of~$D'$ is a FVS of~$G_{A_D}$ and vice versa. In addition, since $v^-$-type vertices correspond to x-type chains, it is possible to make~$G_{A_D}$ acyclic by atomizing only x-type chains.

Let~$F$ be a FVS of~$D$ and let~$F'$ (as before) be the corresponding FVS of~$D'$ that contains only vertices of the type~$v^-$. Then we can construct an agreement forest~$\cR$ of~$\cT$ and~$\cT'$ as follows. One component consists of the tree~$\mathcal{T}_0$. Each of the y-type chains is also one component, as well as the x-type chains that do not correspond to vertices in~$F' $. Finally, for each other x-type chain (that \emph{does} correspond to a vertex in~$F'$), we create a separate component for each leaf. Thus, the number of components is $1+2|A|+(2|V|-|F'|)+\ell|F'| = 1+2(|A|+|V|)+(\ell-1)|F|$. We have to show that the inheritance graph $G_\cR$ is acyclic. We can construct $G_\cR$ from~$G_{A_D}$ as follows. Delete every vertex~$v^-\in F'$ and instead add a vertex for each leaf of the corresponding x-type chain with incoming edges from~$\mathcal{T}_0$ and from $v_{\text{in}}^{u_1},v_{\text{in}}^{u_2},\ldots,v_{\text{in}}^{u_{d^-(v)}}$. Since we only introduced leaves with incoming edges, this modification does not create any directed cycles. Thus, since~$F'$ contains a vertex of each directed cycle of~$G_{A_D}$, and all vertices from~$F'$ have been removed, $G_\cR$ is acyclic. It follows that~$\cR$ is an acyclic agreement forest for~$\cT$ and~$\cT'$.

To show the other direction, let~$\cA$ be an acyclic agreement forest of~$\cT$ and~$\cT'$. We may assume that all y-type chains and z-type chains survive in~$\cA$, since we can choose~$L$ sufficiently large. To see this, recall that we may assume by Lemma~\ref{lem:survives} that each chain either survives or is atomized. Hence, if a y-type chain or z-type chain does not survive, it is atomized and adds~$L$ components to the agreement forest. Thus, by choosing~$L$ large enough (as will be specified later) we can make sure that all y-type chains and z-type chains survive. Secondly, observe that we may in addition assume that all z-type chains are together in a single component (if they are not, we can put them together and reduce the number of components). Now consider two chains that are not both z-type chains. We show that these chains can not be together in a single component of~$\cA$. Firstly, if the two chains are below each other in~$\cT$, then they are next to each other in~$\cT'$. Secondly, if the two chains are next to each other in~$\cT$, then they are separated by a z-type chain in~$\cT$ but not in~$\cT'$. Hence, by~(2) in the definition of an agreement forest, the two chains can not be together in a single component of~$\cA$. Thus, the components of~$\cA$ are as follows. Tree~$\mathcal{T}_0$ is the component containing the root and all z-type chains. Furthermore, each y-type chain, each surviving x-type chain, and each leaf of a non-surviving x-type chain is a separate component. Let~$\tilde{F}$ be the set of vertices of~$G_{A_D}$ corresponding to the non-surviving x-type chains. Thus, each vertex in~$\tilde{F}$ is of the type~$v^-$ or~$v^+$. We will show that~$\tilde{F}$ is a FVS of~$G_{A_D}$ and hence of~$D'$. We can construct $G_{\cA}$ from $G_{A_D}$ as follows. Remove each vertex in~$\tilde{F}$ from~$G_{A_D}$ and add each leaf of the corresponding x-type chain as a separate vertex. Then add edges to these newly added vertices (these edges are not important since they do not create any directed cycles). Since~$\cA$ is an acyclic agreement forest, $G_\cA$ is acyclic and hence~$\tilde{F}$ is a FVS. The size~$|\tilde{F}|$ of the FVS is equal to the number of non-surviving x-type chains. Thus, $|\cA| = 1+2|A|+(2|V|-|\tilde{F}|)+\ell|\tilde{F}| = 1+2(|A|+|V|)+(\ell-1)|\tilde{F}|$.

The reduction is clearly polynomial time. It remains to show that it is approximation preserving. Suppose that there exists a $c$-approximation algorithm for {\sc MAAF}. Say that~$m$ is the size of the MAAF returned by this algorithm and~$m^*$ the size of an optimal solution. Recall that {\sc MAAF} minimizes the size of an agreement forest minus one, so $m-1\leq c\cdot (m^*-1)$. We have shown that~$D$ has a FVS of size at most~$f$ if and only if~$\cT$ and~$\cT'$ have an acylic agreement forest of size at most $1+2(|A|+|V|)+(\ell-1)f$. Thus, $m^*=1+2(|A|+|V|)+(\ell-1)f^*$. Moreover, an approximate solution~$f$ of DFVS can be computed from an approximate solution~$m$ of MAAF by taking $f = ( m - 1 - 2(|A|+|V|) ) / (\ell-1)$. Then we have

\begin{eqnarray*}
f & = & \frac{m - 1 - 2(|A|+|V|)}{\ell-1}\\
& \leq & \frac{c\cdot (m^* - 1) - 2(|A|+|V|)}{\ell-1}\\
& = & \frac{c(2(|A|+|V|)+(\ell-1)f^*)-2(|A|+|V|)}{\ell-1}\\
& = & c\cdot f^* + \frac{2(c-1)(|A|+|V|)}{\ell-1}\\
& = & c\cdot f^* + 1\\
\end{eqnarray*}

if we take $\ell = 2(c-1)(|A|+|V|)+1$. We still need to specify the value of~$L$, which needs to be sufficiently large so that all y-type chains and z-type chains survive. Since any graph trivially has a FVS of size~$|V|$, any constructed {\sc MAAF} instance has $m^*\leq 1+2(|A|+|V|)+(\ell-1)|V|$. Thus, a $c$-approximation algorithm will return an acyclic agreement forest of size~$m$ with $m-1\leq c(m^*-1) \leq c(2(|A|+|V|)+(\ell-1)|V|)$. And hence with~$m\leq c(2(|A|+|V|)+(\ell-1)|V|)+1$. So it suffices to take $L = c(2(|A|+|V|)+(\ell-1)|V|) + 2 = 2c(|A|+|V|)(1 + (c-1)|V|) + 2$.

Now take $\epsilon > 0$. If $f^*<1 / \epsilon$, we can compute an optimal solution for {\sc DFVS} by brute force in polynomial time. Otherwise, $1 \leq  \epsilon\cdot f^*$ and we have

\[
f \leq c\cdot f^* + \epsilon\cdot f^* = (c+\epsilon)f^*.
\]

Thus, if there exists a $c$-approximation for MAAF, then there exists a $(c+\epsilon)$-approximation for DFVS for every fixed~$\epsilon > 0$.

\end{proof}

In contrast to the result in Section~\ref{sec:2dfvs}, the reduction above can only be used for constant~$c$. It does \emph{not} show that e.g. an $\text{O}(\log |X|)$-approximation for {\sc MinimumHybridization} would imply an $\text{O}(\log |V|)$-approximation for {\sc DFVS}. Hence, it is indeed possible that {\sc MinimumHybridization} admits an $\text{O}(\log |X|)$-approximation while {\sc DFVS} does not admit an $\text{O}(\log |V|)$-approximation. For neither of the problems such an approximation is known to exist.

Finally, we note that Theorem \ref{theorem:dfvsToHybrid} also allows
us to improve upon the best-known inapproximability result for {\sc MinimumHybridization}.

\begin{corollary}
There does not exist a polynomial-time $c$-approximation for {\sc MinimumHybridization}, where
$c < 10\sqrt{5}-21 \approx 1.3606$, unless P=NP. If the Unique Games Conjecture
holds, then there does not exist a polynomial-time $c$-approximation for {\sc MinimumHybridization} where
$c < 2$.
\end{corollary}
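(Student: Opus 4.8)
The plan is to compose the approximation-preserving reduction of Theorem~\ref{theorem:dfvsToHybrid} with two further reductions, so as to transfer the known inapproximability of Vertex Cover all the way to {\sc MinimumHybridization}. Concretely, suppose for contradiction that a polynomial-time $c$-approximation for {\sc MinimumHybridization} exists for some $c < 10\sqrt{5}-21$. I would first invoke the parsimonious reduction from Vertex Cover to {\sc DFVS} recalled in the introduction (from~\cite{karp1972}): because it preserves objective values exactly, any $c'$-approximation for {\sc DFVS} immediately yields a $c'$-approximation for Vertex Cover, for every $c' \geq 1$.

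The central step is to arrange the chain so that the additive slack in Theorem~\ref{theorem:dfvsToHybrid} can be absorbed. Since the hypothesised bound $c$ lies \emph{strictly} below the threshold $10\sqrt{5}-21$, I can pick $\epsilon > 0$ small enough that $c + \epsilon < 10\sqrt{5}-21$ still holds. Theorem~\ref{theorem:dfvsToHybrid} then converts the assumed $c$-approximation for {\sc MinimumHybridization} into a polynomial-time $(c+\epsilon)$-approximation for {\sc DFVS}, and composing with the Vertex Cover to {\sc DFVS} reduction produces a polynomial-time $(c+\epsilon)$-approximation for Vertex Cover with $c + \epsilon < 10\sqrt{5}-21$. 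This contradicts the Dinur--Safra inapproximability result~\cite{dinur,DinurAnnals} unless P=NP, which establishes the first claim.

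For the second claim I would argue identically, replacing the unconditional threshold by the Unique-Games threshold of $2$ for Vertex Cover~\cite{Khot2008}. Assuming a $c$-approximation for {\sc MinimumHybridization} with $c < 2$, I again choose $\epsilon > 0$ with $c + \epsilon < 2$, obtain a $(c+\epsilon)$-approximation for Vertex Cover through the same two-step chain, and reach a contradiction with the Khot--Regev bound; hence no such approximation exists if the Unique Games Conjecture holds.

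I expect the only delicate point to be the handling of the additive $\epsilon$: because Theorem~\ref{theorem:dfvsToHybrid} is stated ``for all $\epsilon > 0$'' rather than with a clean multiplicative guarantee, the argument must exploit the \emph{strictness} of the inequality $c < 10\sqrt{5}-21$ (respectively $c < 2$) to leave room for $\epsilon$; no analogous argument would rule out the threshold value itself. Everything else --- the exact value-preservation of the parsimonious reduction and the polynomial-time composability of the three reductions --- is routine.
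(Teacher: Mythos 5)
Your proposal is correct and follows essentially the same route as the paper's own proof: compose Theorem~\ref{theorem:dfvsToHybrid} with the Karp reduction from \textsc{Vertex Cover} to {\sc DFVS} and invoke the Dinur--Safra and Khot--Regev bounds. The only difference is presentational --- you make explicit the choice of $\epsilon$ exploiting the strictness of $c$ below the threshold, which the paper leaves implicit; this is a correct and worthwhile clarification, not a different argument.
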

\begin{proof}
In \cite{karp1972} a simple reduction is shown from the problem \textsc{Vertex Cover}
to the problem {\sc DFVS}. Specifically, given an undirected graph $G$ as input to \textsc{Vertex Cover} we create a directed graph $G'$ by transforming each edge $\{u,v\}$ in $G$ into two directed edges
$(u,v), (v,u)$ in $G'$. It is easy to show that $G'$ has a feedback vertex set of size
$k$ if and only if $G$ has a vertex cover of size $k$. Consequently, any polynomial-time
$c$-approximation algorithm for DFVS can be used to construct a polynomial-time
$c$-approximation for \textsc{Vertex Cover}. The latter problem does not permit a polynomial-time $c$-approximation, for any $c < 10\sqrt{5}-21 \approx 1.3606$, unless P=NP \cite{dinur,DinurAnnals}.
Also, it has been shown that if the Unique Games Conjecture is true then no approximation better than 2 is possible \cite{Khot2008}. Now, the proof of Theorem \ref{theorem:dfvsToHybrid} shows that, if there exists a $c$-approximation for {\sc MinimumHybridization}, then there exists a $(c+\epsilon)$-approximation for DFVS for \emph{every} fixed~$\epsilon > 0$. Hence the existence of a $c$-approximation for {\sc MinimumHybridization} where $c < 10\sqrt{5}-21$ (respectively, $c < 2$) would mean the existence of a $c'$-approximation for DFVS (and thus also for \textsc{Vertex Cover}) where $c' <  10\sqrt{5}-21$ (respectively, $c' < 2$).
\end{proof}

\bibliographystyle{plain}
\bibliography{CycleKiller_v8}

\end{document}